%





\documentclass[twocolumn]{autart}    
\usepackage{cite} 
\usepackage{color}
\usepackage{graphicx}          
\usepackage{amsmath}
\usepackage{subfigure}
\usepackage{amssymb}
\usepackage{amsfonts}
\usepackage{eucal}
\usepackage[titletoc,title]{appendix}
\usepackage{mathptmx} 
\usepackage{epsfig}
\usepackage{epstopdf}

\usepackage{tikz} \usetikzlibrary{calc} \tikzset{>=latex}

\usepackage[colorlinks,linkcolor=blue,urlcolor=blue]{hyperref}

\usepackage{bbm}
\usepackage{mathrsfs} 
\newtheorem{theorem}{Theorem}
\newtheorem{lemm}{Lemma}
\newtheorem{propo}{Proposition}
\newtheorem{remark}{Remark}
\newenvironment{proof}[1][Proof]{\noindent\textbf{#1.} }{\ \rule{0.5em}{0.5em}}

\begin{document}

        \setlength\abovedisplayskip{6pt}
        \setlength\belowdisplayskip{6pt}
        \setlength\abovedisplayshortskip{6pt}
        \setlength\belowdisplayshortskip{6pt}
        \allowdisplaybreaks
        \setlength{\parindent}{1em}
        \setlength{\parskip}{-0em}  
        \addtolength{\oddsidemargin}{6pt}      
        
        \begin{frontmatter}
                
\title{Backstepping Neural Operators for $2\times 2$ Hyperbolic PDEs} 
                
                \thanks[footnoteinfo]{Corresponding author: Mamadou Diagne. 
                }
                
                \author[Wang]{Shanshan Wang}\ead{wss\_dhu@126.com},  
                \author[UMICH]{Mamadou Diagne*}\ead{mdiagne@ucsd.edu},
                \author[UMICH]{Miroslav Krstic}\ead{krstic@ucsd.edu} 
                
                \address[Wang]{Department of Control Science and Engineering, University of Shanghai for Science and Technology, Shanghai, P. R. China, 200093}  
                \address[UMICH]{Department of Mechanical and Aerospace Engineering, University of California San Diego, La Jolla, CA, USA, 92093} 
                
                \begin{keyword}
                    PDE backstepping, deep learning, neural networks, distributed parameter systems
                \end{keyword}
                

\begin{abstract} 
Deep neural network approximation of nonlinear operators, commonly referred to as DeepONet, has proven capable of approximating PDE backstepping designs in which a single Goursat-form PDE governs a single feedback gain function. In boundary control of coupled PDEs, coupled Goursat-form PDEs govern two or more gain kernels---a PDE structure unaddressed thus far with DeepONet. In this paper, we explore the subject of approximating systems of gain kernel PDEs for hyperbolic PDE plants by considering a simple counter-convecting $2\times 2$ coupled system in whose control a $2\times 2$ kernel PDE system in Goursat form arises. Engineering applications include oil drilling, the Saint-Venant model of shallow water waves, and the Aw-Rascle-Zhang model of stop-and-go instability in congested traffic flow. We establish the continuity of the mapping from a total of five plant PDE functional coefficients to the kernel PDE solutions, prove the existence of an arbitrarily close DeepONet approximation to the kernel PDEs, and ensure that the DeepONet-approximated gains guarantee stabilization when replacing the exact backstepping gain kernels. Taking into account anti-collocated boundary actuation and sensing, our $L^2$\emph{-Globally-exponentially} stabilizing (GES) approximate gain kernel-based output feedback design implies the deep learning of both the controller's and the observer's gains. Moreover, the encoding of the output-feedback law into DeepONet ensures \emph{semi-global practical exponential stability (SG-PES).} The DeepONet operator speeds up the computation of the controller gains by multiple orders of magnitude. Its theoretically proven stabilizing capability is demonstrated through simulations.
\end{abstract}

      \end{frontmatter}

\section{Introduction}\label{intro}


The versatility of coupled first-order hyperbolic  PDE systems across various fields extends  to applications such as modeling traffic dynamics \cite{2006Goatin, yu2019traffic}, open channel fluid flow \cite{2003Boundary,diagne2017backstepping,diagne2017control,diagne2012lyapunov}, heat exchanger systems \cite{XU2002}, oil drilling operations \cite{Aamo2016}, flexible pipe assemblies \cite{Ren2006Wave}, natural gas pipeline networks \cite{Gugat2011}, and power transmission line networks \cite{Curr2011A}, among others.
\subsection{Control of coupled linear hyperbolic PDE systems}
The development of stabilizing boundary feedback laws for counter-convective first-order linear hyperbolic systems has evolved since the introduction of the locally exponentially stabilizing boundary controller in \cite{coron1999lyapunov}. This controller was originally crafted for the Saint-Venant model, a nonlinear coupled hyperbolic PDE. Importantly, assuming some compatibility condition of the initial data and restricting the set of admissible equilibrium, \cite{coron1999lyapunov} introduced the simplest entropy-based Lyapunov function to control a system whose total energy is not a suitable Lyapunov candidate. After \cite{coron1999lyapunov}, the Riemann invariants method enabled the determination of gate operating conditions that ensure exponential stability relying exclusively on local water level measurements at the gate positions, in the absence of friction. Adaptive backstepping control of a wave PDE with anti-damping represented in Riemann variables as a coupled first-order hyperbolic PDE was developed in \cite{bresch2014output}. Fundamental contributions, \cite{bastin2010further, bastin2011boundary}, along with \cite{Vazquez2011}, have led to the formulation of a quadratic Lyapunov candidate for $2\times 2$ linear hyperbolic systems. Our work revolves around the PDE backstepping approach, which relies on a single boundary actuation and a full-state measurement, as outlined in the reference \cite{Vazquez2011}.  The approach described in \cite{bastin2010further, bastin2011boundary} could be referred to as the ``dissipativity" method as it pertains to finding dissipative boundary conditions that are similar to ``small gain conditions." Furthermore, results in \cite{bastin2010further, bastin2011boundary} are based on a dual boundary actuation strategy. They exclusively leverage measurements obtained from the boundary positions, so to speak, at the gate locations of a water canal to achieve stabilization. Results on bilateral or dual boundary control of hyperbolic PDEs by means of PDE backstepping were reported in \cite{vazquez2016bilateral, auriol2017two}. Delay-adaptive boundary control of coupled hyperbolic PDE-ODE cascade systems was recently established in \cite{wang2023delay} via Batch-Least Square Identification (BaLSI) \cite{Karafyllis2019Adaptive}. Alongside these two major approaches, a range of methods, including frequency domain analysis \cite{litrico2009boundary}, differential flatness \cite{rabbani2009feed}, sliding-mode control \cite{tang2014sliding}, and proportional-integral control \cite{dos2012multi}, have been employed for the design of control laws for $2\times 2$  hyperbolic system of balance laws.

Aiming at stabilizing a two-phase slugging phenomenon observed in oil drilling processes \cite{di2012model}, PDE backstepping has been used to control a $2+1$ counter-convective system actuated at one boundary \cite{di2012backstepping}. The problem structure outlined in \cite{di2012model} has broad applicability, appearing in various multiphase flow processes like drift-flux modeling in oil drilling \cite{aarsnes2014control} and coupled water-sediment dynamics in river breaches \cite{diagne2017backstepping}. In the latter case, the backstepping method achieves exponential stabilization of supercritical flow regimes, which was not attainable by the design proposed in \cite{di2012backstepping}. A similar system's structure, but in the $3+1$ form, can be found in \cite{burkhardt2021stop}, where the control of a two-class traffic system is studied.
 
 Generalized results on the exponential stabilization of an arbitrary number of coupled waves among which a single pattern is controlled were achieved in \cite{Meglio2013}. It is a non-trivial extension of the backstepping approach \cite{Vazquez2011,di2012backstepping} to the so-called $n+1$ case where the boundary actuation of a single counter-convective pattern regulates systems with an arbitrary number of coupled waves. Later on, systems consisting of $n+m$ coupled linear hyperbolic PDE systems were exponentially stabilized by actuating $m$ components in \cite{Hu2016} (see \cite{hu2019boundary} for the inhomogeneous quasilinear case), and stability in minimum time was provided in \cite{Auriol2015MinimumTC}. Following these major developments, \cite{Anfinsen2016} proposed an adaptive observer to estimate the boundary parameters of an $n+1$ system motivated by the need to identify the bottom hole influxes of hydrocarbon caused by high pressure formations in the well during oil drilling operations. The extension of  \cite{Anfinsen2016} to $n+m$ systems in \cite{anfinsen2017estimation} was followed by major progress on adaptive control design \cite{anfinsen2019adaptive}. The recent results in\cite{coron2017finite,coron2021boundary} allow for finite-time stabilization of linear and coupled hyperbolic systems with space and time-dependent parameters. In light of the current context, it's worth noting the emergence of nonlinear controllers for nonlinear infinite-dimension systems of conservation laws \cite{karafyllis2022spill, karafyllis2023output}. These controllers demonstrate the potential for achieving global exponential stabilization of spill-free transfer systems governed by nonlinear and coupled hyperbolic systems.

In general, the conception of PDE controllers can lead to complex gain functions that require non-obvious computational effort. Our contribution signifies an advancement in leveraging the computational capabilities offered by machine learning techniques to enhance the feasibility of hyperbolic PDE control. 
\subsection{Contributions}
We expedite the computation of gain kernel PDEs that emerge from backstepping design for coupled linear hyperbolic systems. Developing further the DeepONet design originally introduced in \cite{bhan2023neural} and then \cite{krstic2023neural,qi2023neural} for simpler PDE systems, we introduce Neural Operator (NO) approximations for kernels applicable to $2\times 2$ hyperbolic PDEs to encapsulate the mapping from the functional coefficients of the plant into a previously trained DeepONet. We design a neural network architecture, or more precisely, a computational resource capable of calculating the gains through function evaluations, eliminating the necessity to solve the coupled gain kernel PDEs defined on a triangular domain. Recently, DeepONet achieved gain kernel computation for full-state feedback control in the ARZ traffic system (simpler case) in \cite{zhang2023neural}. Furthermore, results on DeepONet-based adaptive control \cite{lamarque2024adaptive}, gain scheduling \cite{lamarque2024gain}, and moving-horizon estimators (MHE) \cite{bhan2024moving} were recently developed.

Differing from \cite{wang2023Deep}, where DeepONet approximation of gain kernel PDEs was achieved using a composition of operators defined by a single hyperbolic PDE in Goursat form and one parabolic PDE on a rectangular domain, the scenario involving coupled hyperbolic PDEs in cascade, along with their state observer, gives rise to an approximate closed-loop system governed by four interconnected Goursat-form PDEs. Two of these Goursat-form PDEs originate from the controller, and the remaining two characterize the gain function of the observer. Both the DeepONet approximates of the controller and the observer gain functions are the output of the $2\times2$ coupled nonlinear operators of Goursat PDEs fed by a total of five plant functional coefficients. The configuration of the studied DeepOnet-approximate nonlinear operators expands the scope of NO designs originally introduced by the Machine Learning community \cite{chen1995universal,lu2021learning,lu2019deeponet,li2020fourier} because such a distinctive coupling between Goursat PDEs is primarily established through backstepping control design. Our present contribution is twofold:

\begin{itemize}
\item \textbf{DeepONet for the gain kernels of the output-feedback law.}
We derive a \emph{Global Exponential Stability (GES)} result for a $2\times 2$ linear hyperboblic PDE system equipped with an output feedback control law fed by the NO-approximate controller's and observer's gain functions. Considering that the controller-observer system is a composition of two linear systems, the global exponential stability of the closed-loop system that is achieved by the exact gain functions is preserved under the approximated one with an estimate of the decay rate. The effect of the accuracy of the approximation on the convergence rate is elucidated: less accurate data slows the decay rate.
\item \textbf {Encoding of the output feedback law.} Utilizing insights gained from the DeepONet controller and observer gain kernels, as well as the observed system state values, we develop a neural operator (NO) approximation for the output-feedback control law, including the state of the observer. This case deals with the complete learning of a control law for a $2\times 2$ linear and coupled hyperbolic system using anti-collocated boundary actuation and sensing. We establish a \emph{Semi-global Practical Exponential Stability (SG-PES) } estimate for the resulting closed-loop system. The SG-PES result cannot be attributed to the approximation of Goursat-form PDEs that are multiplicative gain kernels embedded within the control law and the state estimator, but to the approximation of the observer states $u$ and $v$ that are also used into a \emph{fully-learned} control law. As stated concisely, the approximation error is not solely multiplicative but also additive, resulting in the SG-PES outcome. In a nutshell, the stability result is semi-global considering that the dataset includes samples of observer states $u$ and $v$ with bounded magnitudes. 
\end{itemize}

In both cases, the method accelerates the computation of control gains, significantly improving computation speed. Our theoretically proven stability result is demonstrated through simulations, and the code is available on github.

\textbf{\emph{Organization of paper:}} 
Section \ref{sec:problem_statement} succinctly presents the design of an exponentially stabilizing output-feedback boundary control law for $2\times 2$ hyperbolic PDE systems. Sections \ref{sec3} and \ref{Stab-Deep} present the approximation of the kernel operators and the global exponential stabilization (GES) under the approximated controller's gain functions and observer's gain functions via DeepONet. 
Section \ref{Sec5} presents a semi-global practical exponential stability (SG-PES) result when the totality of the output feedback law is learned via DeepONet. Section \ref{Simulation} and Section \ref{conclusion} present our simulation results and concluding remarks, respectively.

\textbf{\emph{Notation:}} We define the $L^2$-norm for $\chi(x)\in L^2[0,1]$ as $\rVert\chi \rVert_{L^2}^2=\int_0^1|\chi(x)|^2dx$. For the convenience, we set $\rVert\chi \rVert^2=\rVert\chi \rVert_{L^2}^2$. The supremum norm is denoted $\rVert \cdot \rVert_\infty$.

\maketitle


\section{\protect Preliminaries and  problem statement}\label{sec:problem_statement}
\textbf{Preliminaries.}
We consider linear hyperbolic  systems 
\begin{align}
\partial_t u(x, t) =&-\lambda(x) \partial_x u(x, t)+\sigma(x) u(x, t) \notag \\
& +\omega(x) v(x, t), \label{eq:sys_u} \\
\partial_t v(x, t)=&\mu(x) \partial_x v(x, t)+ \theta(x) u(x, t),\label{eq:sys_v}
\end{align}
with boundary conditions 
\begin{align}
u(0, t) & = q v(0, t),\label{eq:sys_BC_1} \\
v(1, t) & = U(t),\label{eq:sys_BC_2}
\end{align}
where, 
\begin{align}
\lambda, \mu & \in C^1([0, 1]),\\
\sigma, \omega, \theta & \in C^0([0, 1]),\\
q& \in \mathbbm R,
\end{align}
and initial conditions
\begin{equation}
v^0(x), \; u^0(x) \in  L^2([0, 1]).\label{initial_data}
\end{equation}
The transport speeds are assumed to satisfy
\begin{equation}
-\mu(x) < 0 < \lambda(x), \quad \forall x \in [0, 1],
\end{equation}
and   $\lambda,\ \mu,\ \sigma,\ \omega, \ \theta$ are all bounded with  $\underline \lambda\leq \lambda\leq\bar\lambda$, $\underline \mu\leq \mu\leq\bar\mu$, $\underline \sigma\leq \sigma \leq\bar\sigma$, $\underline \omega\leq \omega\leq\bar\omega$, and $\underline \theta\leq \theta\leq\bar\theta$.

\subsection{Full-state boundary  feedback control law}
Exploiting  the following backstepping transformation \cite{Meglio2013},
\begin{align}
\nonumber \beta(x, t)  =&v(x, t)-\int_0^x k_1(x, \xi) u(\xi, t) d\xi \\
&-\int_0^x k_2(x, \xi) v(\xi, t) d\xi,\label{eq:backstepping_2} 
\end{align}
system \eqref{eq:sys_u}--\eqref{eq:sys_BC_2} can be transformed into the target system 
\begin{align}
\nonumber \partial_t u(x,& t)  = - \lambda(x) \partial_x u(x, t)+\sigma(x) u(x, t) +\omega(x) \beta(x, t)\\
&+\int_0^x c(x,\xi) u(\xi,t)d\xi +\int_0^x \kappa(x,\xi) \beta(\xi,t)d\xi,\label{eq:sys_alpha} \\
\partial_t \beta(x,& t)=\mu(x) \partial_x \beta(x, t),\label{eq:sys_beta}
\end{align}
with boundary conditions defined as
\begin{align}
u(0, t) & = q \beta(0, t) , \label{eq:sys_BC_alpha} \\
\beta(1, t) & =  0,\label{eq:sys_BC_beta}
\end{align}
where $c(x,\xi)$ and $\kappa(x,\xi)$ are functions to be determined. The realization of this mapping requires the kernels in the backstepping transformation \eqref{eq:backstepping_2} to satisfy the following PDEs\footnote{Here, we use the prime notation to indicate derivatives.}
\begin{align}
\mu(x) \partial_x k_1-\lambda(\xi) \partial_\xi k_1
&=\lambda^\prime(\xi) k_1+\sigma(\xi) k_1+\theta(\xi) k_2,\label{eq:k_1} \\
\mu(x) \partial_x k_2+\mu(\xi) \partial_\xi k_2&=-\mu^\prime(\xi) k_2 +\omega(\xi) k_1,\label{eq:k_21}
\end{align}
with boundary conditions
\begin{align}
k_1(x, x)& =-\dfrac{\theta(x)}{\lambda(x) + \mu(x)},\label{eq:k_BC_1} \\
\mu(0)k_2(x, 0)& = q\lambda(0)k_1(x,0).\label{eq:k_BC_2}
\end{align}
The system \eqref{eq:k_1}--\eqref{eq:k_BC_2} defined over the triangular domain $ \mathcal T = \{ (x, \xi) \; | \; 0 \leq \xi \leq x \leq 1 \}$, is a coupled $2\times 2$ Goursat-form PDEs govern two gain kernels and the coefficient $\kappa$ and $c$ are chosen to satisfy
\begin{align}
\kappa(x,\xi)&=\omega(x)k_2(x,\xi)+\int_\xi^x\kappa(x,s)k_2(s,\xi)ds,\label{kap}\\
c(x,\xi)&=\omega(x)k_1(x,\xi)+\int_\xi^x\kappa(x,s)k_1(s,\xi)ds.\label{c}
\end{align}
From \eqref{eq:sys_BC_2}, \eqref{eq:backstepping_2}, and  \eqref{eq:sys_BC_beta}, the boudary controller is 
\begin{align}
U(t)=\int_0^1 k_1(1, \xi) u(\xi, t) d\xi +\int_0^1 k_2(1, \xi) v(\xi, t) d\xi.\label{equ-U}
\end{align}
The invertibility of the transformation \eqref{eq:backstepping_2} together with the existence of unique solution to \eqref{eq:k_1}--\eqref{eq:k_BC_2} is established in \cite{Meglio2013}. The invertibility of the transformation induces equivalent stability properties of the target and original systems.
                
The inverse transformation of \eqref{eq:backstepping_2} is given by
\begin{align}
\nonumber v(x, t)  =&\beta(x, t)+\int_0^x l_1(x, \xi) u(\xi, t) d\xi \\
&+\int_0^x l_2(x, \xi) \beta(\xi, t) d\xi, \label{eq:backstepping_2-inverse}
\end{align}
where
\begin{align}
l_1(x, \xi) & =k_1(x,\xi)+\int_\xi^x k_2(x, s) l_1(s, \xi) ds,\label{eq:kernel-inverse-1}\\
l_2(x, \xi) & =k_2(x,\xi)+\int_\xi^x k_2(x, s) l_2(s,\xi) ds.\label{eq:kernel-inverse-2}
\end{align} 
\subsection{Observer design for an output feedback control law}
Our goal is to develop an exponentially convergent observer capable of estimating the spatially distributed states of system \eqref{eq:sys_u}--\eqref{eq:sys_BC_2} using the available boundary point measurement $v(0,t)$, which is anti-collocated with the boundary point of actuation. We design an  observer consisting of the copy of the plant plus some output injection terms. The derivation of the observer's gains that ensure convergence of the estimated states to the plant states is completed using backstepping design. The following observer is stated: 
\begin{align}
\partial_t \hat u(x, t) =&-\lambda(x) \partial_x \hat u(x, t)+\sigma(x)\hat  u(x, t)+\omega(x)\hat  v(x, t) \notag \\
& +p_1(x)(v(0,t)-\hat v(0,t)), \label{eq:sys_u-obs} \\
\nonumber \partial_t \hat v(x, t)=&\mu(x) \partial_x \hat v(x, t)+ \theta(x)\hat  u(x, t)\\
&+p_2(x)(v(0,t)-\hat v(0,t)),\label{eq:sys_v-obs}
\end{align}
with boundary conditions 
\begin{align}
\hat u(0, t) & = q v(0, t),\label{eq:sys_BC_1-obs} \\
\hat v(1, t) & = U(t).\label{eq:sys_BC_2-obs}
\end{align}
The functions $p_1(x)$ and $p_2(x)$ are the observer output injection gains to be determined via backstepping design. 
Denoting the observer error 
\begin{align}
\tilde u(x,t)&=u(x,t)-\hat u(x,t),\\
\tilde v(x,t)&=v(x,t)-\hat v(x,t),
\end{align}
it follows the error dynamics
\begin{align}
\partial_t \tilde u(x, t) =&-\lambda(x) \partial_x \tilde u(x, t)+\sigma(x)\tilde u(x, t) \notag \\
& +\omega(x)\tilde  v(x, t)-p_{1}(x)\tilde v(0,t), \label{eq:sys_u-err} \\
\partial_t \tilde v(x, t)=&\mu(x) \partial_x \tilde v(x, t)+ \theta(x)\tilde  u(x, t)-p_{2}(x)\tilde v(0,t),\label{eq:sys_v-err}
\end{align}
with boundary conditions 
\begin{align}
\tilde u(0, t) & =0,\label{eq:sys_BC_1-err} \\
\tilde v(1, t) & =0.\label{eq:sys_BC_2-err}
\end{align}
To design the observer output injection gains, backstepping transformations are again introduced as 
\begin{align}
\tilde u(x, t) & =\tilde \alpha(x,t)+\int_0^x m_1(x, \xi) \tilde \beta(\xi,t) d\xi,\label{eq:kernel-inverse-m1}\\
\tilde v(x, t)& =\tilde \beta (x,t)+\int_0^x m_2(x, \xi) \tilde \beta(\xi,t) d\xi,\label{eq:kernel-inverse-m2}
\end{align}
to map system \eqref{eq:sys_u-err}--\eqref{eq:sys_BC_2-err} into the target system
\begin{align}
\partial_t \tilde \alpha(x, t) =&-\lambda(x) \partial_x \tilde \alpha(x, t)+\sigma(x)\tilde \alpha(x, t) \notag \\
& +\int_0^xg(x,\xi)\tilde\alpha(\xi,t)\mathrm d\xi,\label{eq:target_u-obs} \\
\nonumber\partial_t \tilde \beta(x, t)=&\mu(x) \partial_x \tilde \beta(x, t)+ \theta(x)\tilde  \alpha(x, t)\\
&+\int_0^xh(x,\xi)\tilde\alpha(\xi,t)\mathrm d\xi,\label{eq:target_v-obs}
\end{align}
with boundary conditions 
\begin{align}
\tilde \alpha (0, t)  =0, \quad
\tilde \beta(1, t) =0,\label{eq:target_BC_2-obs}
\end{align}
where $g(x,\xi)$ and $h(x,\xi)$ are functions to be determined. The stability of the target system  governed by \eqref{eq:target_u-obs}--\eqref{eq:target_BC_2-obs} and  in relation to that of the error dynamics \eqref{eq:sys_u-err}--\eqref{eq:sys_BC_2-err} was demonstrated  in Lemma 3.3 \cite{Meglio2013}.

To transform \eqref{eq:sys_u-err}--\eqref{eq:sys_BC_2-err} into \eqref{eq:target_u-obs}--\eqref{eq:target_BC_2-obs}, the kernels of \eqref{eq:kernel-inverse-m1} and \eqref{eq:kernel-inverse-m2} must adhere to the following PDEs
\begin{align}
\lambda(x) \partial_x m_1
-\mu(\xi) \partial_\xi m_1&=\mu^\prime(\xi)m_1+\delta(x) m_1+ \omega(x)m_2,\label{eq:m_1} \\
\mu(x) \partial_x m_2+\mu(\xi) \partial_\xi m_2&=-\mu^\prime(\xi) m_2 -\theta(x) m_1,\label{eq:m_21}
\end{align}
with boundary conditions
\begin{align}
m_1(x, x)& =\dfrac{\omega(x)}{\lambda(x) + \mu(x)},\label{eq:m_BC_1} \\
m_2(1, \xi)& =0.\label{eq:m_BC_2}
\end{align}
The kernels  PDEs, as specified in \eqref{eq:m_1}--\eqref{eq:m_BC_2} is defined over the triangular domain $ \mathcal T = \{ (x, \xi) \; | \; 0 \leq \xi \leq x \leq 1 \}$. It is a coupled \emph{$2\times 2$ Goursat-form PDEs} governing two  kernels equations and the coefficient $g$ and $h$ are specified such that
\begin{align}
g(x,\xi)&=-\theta(\xi)m_1(x,\xi)-\int_\xi^xm_{1}(x,s)h(s,\xi)ds,\\
h(x,\xi)&=-\theta(\xi)m_{2}(x,\xi)-\int_\xi^xm_{2}(x,s)h(s,\xi)ds.
\end{align}
The invertibility of  the transformation \eqref{eq:backstepping_2} together with the existence of unique solution to \eqref{eq:k_1}--\eqref{eq:k_BC_2} is established in \cite{Meglio2013}. It consequently  implies equivalent  stability properties of the target system  \eqref{eq:target_u-obs}--\eqref{eq:target_BC_2-obs} and the original system \eqref{eq:sys_u-err}--\eqref{eq:sys_BC_2-err}, and 
\begin{align}
p_1(x)=&m_1(x,0)\mu(0),\\
p_2(x)=&m_2(x,0)\mu(0).
\end{align}          
The inverse transformation of \eqref{eq:kernel-inverse-m2} is given by
\begin{align}
\tilde \beta(x,t)=\tilde v(x,t)+\int_0^xr_2(x,\xi)\tilde v(\xi,t)\mathrm d\xi,\label{equ-inverse-m2}
\end{align}
where $r_2(x,\xi)$ satisfies
\begin{align}
r_2(x, \xi) & =-m_2(x,\xi)-\int_\xi^x m_2(x, s) r_2(s,\xi) ds.\label{eq:kernel-inverse-2-r2}
\end{align}
The substitution of  \eqref{equ-inverse-m2} into \eqref{eq:kernel-inverse-m1} results in \begin{align}
\nonumber\tilde \alpha(x, t) =&\tilde u(x,t)-\int_0^x m_1(x, \xi) \bigg(\tilde v(\xi,t)\\
\nonumber&+\int_0^\xi r_2(\xi,s)\tilde v(s,t)\mathrm ds\bigg)\mathrm d\xi\\
\nonumber=&\tilde u(x,t)-\int_0^x m_1(x, \xi) \tilde v(\xi,t)\mathrm d\xi\\
\nonumber&-\int_0^x \int_\xi^x m_1(x,s)r_2(s,\xi)\mathrm ds\tilde v(\xi,t)\mathrm d\xi\\
=&\tilde u(x,t)+\int_0^x r_1(x, \xi) \tilde v(\xi,t)\mathrm d\xi,\label{equ-inverse-m1}
\end{align}
where
\begin{align}
r_1(x, \xi) & =-m_1(x, \xi)-\int_\xi^x m_1(x, s) r_2(s, \xi) ds.\label{eq:kernel-inverse-2-r1}
\end{align}

The exponential stability of the target system governed by \eqref{eq:target_u-obs}--\eqref{eq:target_BC_2-obs} and to that of the error dynamics \eqref{eq:sys_u-err}--\eqref{eq:sys_BC_2-err} is stated in Lemma 3.3 \cite{Meglio2013}. The invertibility of the transformation  implies the  global exponential convergence  of  the error system in $L^2$ sense  \eqref{eq:sys_u-err}--\eqref{eq:sys_BC_2-err} and the $L^2$-Global Exponential Stability of the plant \eqref{eq:sys_u}--\eqref{eq:sys_BC_2} combined with the observer \eqref{eq:sys_u-obs}--\eqref{eq:sys_BC_1-obs} and subject to the control law 
\begin{align}
U(t)=\int_0^1 k_1(1, \xi) \hat u(\xi, t) d\xi +\int_0^1 k_2(1, \xi) \hat v(\xi, t) d\xi.\label{equ-hatU}
\end{align}
We refer the reader to \cite{Meglio2013} for more details about the design of the output feedback law \eqref{equ-hatU}, turning our attention to the DeepOnet designs for the output feedback law.

\textbf{Problem statement.} As shown in Figures \ref{Operator} and \ref{Learning-process-kernel}, our goal is to design NOs to learn the controller and observer gain functions governed by \eqref{eq:k_1}--\eqref{eq:k_BC_2} and \eqref{eq:m_1}--\eqref{eq:m_BC_2}, respectively.  The plant function coefficients are the inputs of the nonlinear operators defined by these hyperbolic/Goursat PDEs. We first aim to prove a DeepONet approximation to the kernel PDEs by showing the continuity of the mapping from plant PDE coefficients to kernel PDE solutions.
The second part of our design consists of the DeepONet encoding of the output-feedback law. Proof-based machine learning designs are presented in this paper.


\section{Accuracy of Approximation of Backstepping Kernel Operator with DeepONet }\label{sec3}

\subsection{Boundedness of the gain kernel functions}

\begin{lemm}\label{lem1}
{\bf\em }
For every $\lambda, \mu  \in C^1([0, 1]),\ 
\sigma, \omega, \theta  \in C^0([0, 1])$, and $
q\in \mathbbm R$, the gain kernels $k_i(x,\xi), \textcolor{black}{m_i(x,\xi),}\ i=1,2$ satisfying the PDE systems \eqref{eq:k_1}--\eqref{eq:k_BC_2} \textcolor{black}{and \eqref{eq:m_1}--\eqref{eq:m_BC_2}, respectively}, has a unique $C^1(\mathcal T)$ solution with the following  property
\textcolor{black}{\begin{align}
|k_i(x,\xi)|\leq& N_i {\rm e}^{M_i},\quad i=1,2,\quad\forall(x,\xi)\in\mathcal T,
\label{equ-k-bouded}\\
|m_i(x,\xi)|\leq& N_i {\rm e}^{M_i},\quad i=1,2,\quad\forall(x,\xi)\in\mathcal T,
\label{equ-m-bouded}
\end{align}}
where $N_i>0,\ M_i>0,\ i=1,2$ are constants.
\end{lemm}

\begin{proof}\rm
  The proof of Lemma \ref{lem1} can be found in  \cite{Meglio2013}.
\mbox{}\hfill
\end{proof}
\subsection{Approximation of the neural operators}

We pursue the characterization of  the neural operators  to learn the gain kernels function-to-function mapping using finite input-output pairs of collected data. Recall that based on the gain kernels PDEs  \eqref{eq:k_1}--\eqref{eq:k_BC_2}, 
a neural approximation of the  operator  $(\lambda,\mu,\omega,\sigma,\theta,q)\mapsto (k_1,\ k_2,\ \textcolor{black}{m_1,\ m_2})$ is describe by   Figure \ref{Operator}. 
\begin{figure}[t]
\centering
\includegraphics[width=0.405\textwidth]{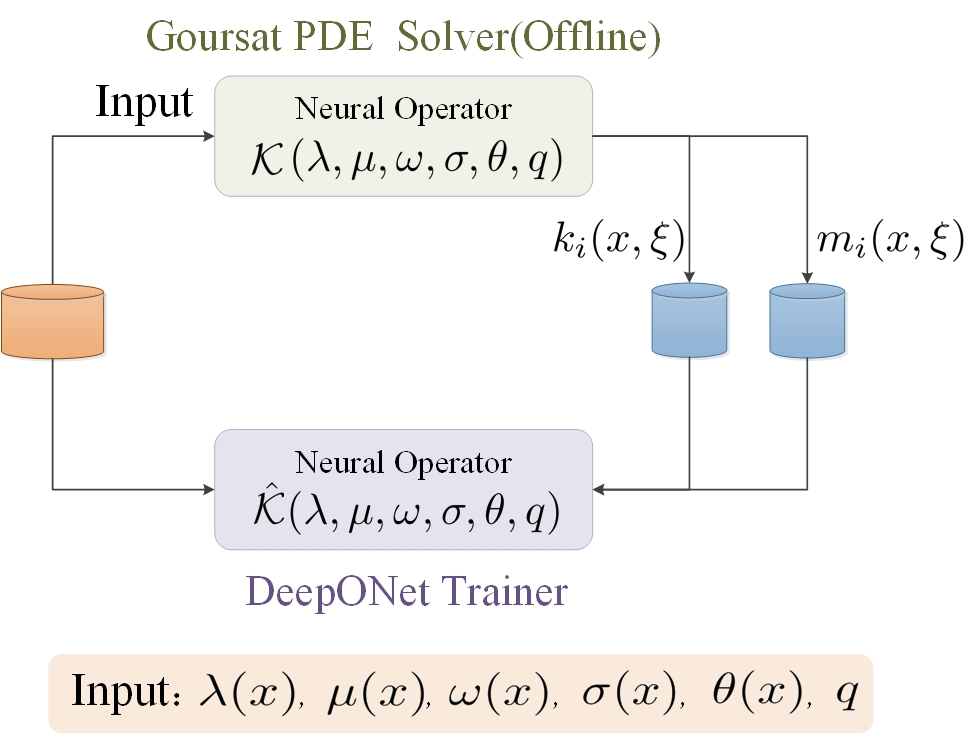}
\vspace{-0.15cm}
\caption{Learning of the kernel functions via DeepONet and through the operator  described by the mapping $(\lambda,\mu,\omega,\sigma,\theta,q)\to (k_1,k_2,m_1,m_2)$. 
Computing multiple solutions of kernel PDEs \eqref{eq:k_1}--\eqref{eq:k_BC_2} in the Goursat form for different functions $\lambda(x),\ \mu(x),\ \omega(x),\ \sigma(x),\ \theta(x)$ and parameters $q$,  completes the training procedure of the Neural Operator $\hat{\mathcal{K}}$.} \label{Operator}
\end{figure}

\textcolor{black}{Define the operator ${\mathcal{K}}: (C^1[0,1])^2\times (C^0[0,1])^3\times  \mathbbm R\mapsto (C^1(\mathcal T))^{4}$, where
\begin{align}
\nonumber&{\mathcal{K}}(\lambda, \mu,  \sigma, \omega, \theta,
q)(x,\xi)\\
:=&(k_{1}(x,\xi),k_{2}(x,\xi),m_{1}(x,\xi),m_{2}(x,\xi)),\label{neur-op2}
\end{align}
allowing to introduce the operator  ${\mathcal{M}}:(C^1[0,1])^2\times (C^0[0,1])^3\times  \mathbbm R\mapsto (C^1(\mathcal T))^{2}\times (C^0(\mathcal T))^{4}\times(C^0[0,1])^{2}\times (C^1(\mathcal T))^{4}$  defined by 
\begin{align}
\nonumber&{\mathcal{M}}(\lambda, \mu,  \sigma, \omega, \theta,
q)(x,\xi)\\
:=&(k_{1},k_{2},c,\kappa,K_1, K_2,K_3,K_4,K_5,K_6,K_7,K_8)\,,\label{equ-M-1}
\end{align}
where
\begin{align}
K_1(x)=&(\lambda(x) + \mu(x))k_1(x, x)+\theta(x),\label{equ-K1}\\
K_2(x)=&-\lambda(0)qk_1(x, 0)+\mu(0)k_2(x, 0),\\
\nonumber K_3(x,\xi)=&-\mu(x) \partial_x k_1+\lambda(\xi) \partial_\xi k_1+\lambda^\prime(\xi) k_1+\sigma(\xi) k_1\\
&+\theta(\xi) k_2,\\
K_4(x,\xi)=&-\mu(x) \partial_x k_2-\mu(\xi) \partial_\xi k_2-\mu^\prime(\xi) k_2 +\omega(\xi) k_1,\label{equ-K4}\\
\nonumber K_5(x,\xi)=&-\lambda(x) \partial_x m_1+\mu(\xi) \partial_\xi m_1-\mu^\prime(\xi) m_1+\sigma(\xi) m_1\\
&+\omega(x) m_2,\\
K_6(x,\xi)=&\mu(x) \partial_x m_2+\mu(\xi) \partial_\xi m_2+\mu^\prime(\xi) m_2 +\theta(\xi) m_1,\\
K_7(x)=&m_1(x,x)(\lambda(x)+\mu(x))-\omega(x),\\
K_8(\xi)=&m_2(1,\xi),\label{equ-K6}
\end{align}
is introduced. The operators ${\mathcal{K}}$ and ${\mathcal{M}}$ are useful to state the following theorem. }

\medskip\begin{theorem}
\label{thm-karniadakis-bkst}
{\bf\em [DeepONet approximation of kernels]}

Consider the neural operator defined in \eqref{equ-M-1}, along with \eqref{equ-K1}--\eqref{equ-K4} and let $B_\lambda,\ B_\mu,\ B_\sigma,\ B_\omega,\ B_\theta,\ B_{\lambda'},\ B_{\mu'}>0$ be arbitrarily large and $\epsilon>0$, there exists a neural operator $\hat{\mathcal{M}}:(C^1[0,1])^2\times (C^0[0,1])^3\times  \mathbbm R\mapsto (C^1(\mathcal T))^{2}\times (C^0(\mathcal T))^{2}\times(C^0[0,1])^{2}\times (C^1(\mathcal T))^{4}$ such that, 
\begin{align}
&|{\mathcal{M}}(\lambda, \mu,  \sigma, \omega, \theta,
q)(x,\xi)-\hat{\mathcal{M}}(\lambda, \mu,  \sigma, \omega, \theta,
q)(x,\xi)|<\epsilon,
\end{align}
holds for all Lipschitz  $\lambda,\ \mu,\
\sigma,\ \omega,\ \theta$ and derivations of $\lambda, \mu$  with properties that $\rVert \lambda \rVert_\infty  \leq B_\lambda,\ \rVert \mu \rVert_\infty  \leq  B_\mu,\ \rVert \sigma \rVert_\infty  \leq B_\sigma,\ \rVert \omega \rVert_\infty  \leq B_\omega,\ \rVert \theta \rVert_\infty  \leq B_\theta,\ \rVert \lambda' \rVert _\infty \leq B_{\lambda'},\ \rVert \mu' \rVert_\infty  \leq  B_{\mu'}$, namely, 
there exists a neural operator $\hat {\mathcal{K}}$ such that
\begin{align}\label{equ-epsilon}
\nonumber&|\tilde k_1(x,\xi)|+|\tilde k_2(x,\xi)|+|\tilde c(x,\xi)|+|\tilde \kappa(x,\xi)|\\
\nonumber&+|(\lambda(x) + \mu(x))\tilde k_1(x, x)|+|\lambda(0)q\tilde k_1(x, 0)-\mu(0)\tilde k_2(x, 0)| \\
\nonumber&+|-\mu(x) \partial_x\tilde  k_1+\lambda(\xi) \partial_\xi \tilde k_1+\lambda^\prime(\xi)\tilde  k_1+\sigma(\xi)\tilde  k_1+\theta(\xi) \tilde k_2|\\
\nonumber&+|-\mu(x) \partial_x\tilde  k_2-\mu(\xi) \partial_\xi \tilde k_2-\mu^\prime(\xi)\tilde  k_2 +\omega(\xi)\tilde  k_1|\\
\nonumber&~\textcolor{black}{+|\lambda(x) \partial_x \tilde m_1-\mu(\xi) \partial_\xi \tilde m_1+\mu^\prime(\xi)\tilde m_1-\sigma(\xi) \tilde m_1}\\
\nonumber &~\textcolor{black}{-\omega(x) \tilde m_2|+|\mu(x) \partial_x \tilde m_2+\mu(\xi) \partial_\xi \tilde m_2+\mu^\prime(\xi) \tilde m_2}\\
&~\textcolor{black}{+\theta(\xi) \tilde m_1|}<\epsilon,
\end{align}
where $\tilde c(x,\xi)=c(x,\xi)-\hat c(x,\xi)$, $\tilde \kappa(x,\xi)=\kappa(x,\xi)-\hat \kappa(x,\xi)$, and 
\begin{align}
\tilde k_i(x,\xi)=&k_i(x,\xi)-\hat k_i(x,\xi),\quad i=1,2,\\
\tilde m_i(x,\xi)=&m_i(x,\xi)-\hat m_i(x,\xi),\quad i=1,2,
\end{align}
and 
\begin{align}
\nonumber&(\hat k_1(x,\xi),\hat k_2(x,\xi), \textcolor{black}{\hat m_1(x,\xi),\hat m_2(x,\xi)})\\
=&\hat {\mathcal{K}}(\lambda, \mu,  \sigma, \omega, \theta, q)(x,\xi). 
\end{align}
\end{theorem}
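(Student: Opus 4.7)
The plan is to invoke the universal DeepONet approximation theorem cited as \cite[Thm. 2.1]{lu2021advectionDeepONet}, which guarantees, for any continuous nonlinear operator from a compact subset of one Banach space of continuous functions into a Banach space of continuous output functions, the existence of a DeepONet uniform approximation to within arbitrary accuracy. Accordingly, the argument splits into three stages: (i) exhibit a compact set of admissible inputs, (ii) verify continuity of $\mathcal{M}$ on it, and (iii) apply the theorem componentwise to assemble $\hat{\mathcal{M}}$.

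For stage (i), I would appeal to Arzel\`a--Ascoli. The admissible tuples $(\lambda,\mu,\sigma,\omega,\theta)$ are uniformly bounded in $L^\infty$ by the constants $B_\lambda,B_\mu,B_\sigma,B_\omega,B_\theta$, and the Lipschitz assumption together with $B_{\lambda'},B_{\mu'}$ supplies a uniform modulus of continuity; hence the admissible set is relatively compact in $C^0([0,1])^5$. The scalar $q$ is taken in a bounded interval of $\mathbb{R}$ (implicit in the hypotheses, since no universal $\hat{\mathcal{M}}$ could cover unbounded $q$).

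For stage (ii), I would integrate the Goursat system \eqref{eq:k_1}--\eqref{eq:k_BC_2} along characteristics to produce a coupled Volterra integral system for $(k_1,k_2)$, following the existence argument of \cite{Meglio2013}. Successive approximation yields a series converging absolutely and uniformly in $C^1(\mathcal T)$ with rate controlled by the $L^\infty$-bounds above, and Lemma~\ref{lem1} already supplies the a priori bounds on $k_1,k_2$. A Gronwall-type perturbation analysis applied to the same iteration then gives continuity of $(\lambda,\mu,\sigma,\omega,\theta,q)\mapsto(k_1,k_2)$ from the product input space into $C^1(\mathcal T)^2$. The quantities $K_1,\ldots,K_4$ defined in \eqref{equ-K1}--\eqref{equ-K4} are algebraic/differential combinations of this data and thus inherit continuity into $C^0([0,1])^2\times C^0(\mathcal T)^2$. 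The coefficients $c,\kappa$ satisfy their own Volterra equations \eqref{kap}--\eqref{c} whose sources and kernels depend continuously on $(k_1,k_2,\omega)$; a second successive-approximation argument supplies continuous dependence of $(c,\kappa)$ in $C^0(\mathcal T)^2$. Concatenating these yields continuity of the full operator $\mathcal{M}$ from the compact input set into the product output space. Step (iii) then applies the DeepONet theorem to each of the eight component operators of $\mathcal{M}$ at tolerance $\epsilon/8$; combining the resulting branch/trunk networks into a single $\hat{\mathcal{M}}$ and invoking the triangle inequality on the sum of the eight pointwise errors produces \eqref{equ-epsilon}.

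The hard part will be the $C^1(\mathcal T)$-continuity of the kernel map in stage (ii). In contrast to $C^0$-continuity, which follows at once from uniform convergence of the Volterra series, the $C^1$-bound requires a simultaneous Gronwall estimate on the triples $(k_i,\partial_x k_i,\partial_\xi k_i)$ under perturbations of both the coefficients and their derivatives. The hypotheses $\rVert\lambda'\rVert_\infty\le B_{\lambda'}$ and $\rVert\mu'\rVert_\infty\le B_{\mu'}$ are indispensable here, since $\lambda'$ and $\mu'$ enter as multipliers on the right-hand side of \eqref{eq:k_1}--\eqref{eq:k_21} and a perturbation in them produces a first-order perturbation of $\partial_{x,\xi}k_i$ that must be controlled in the supremum norm; this is precisely where the Lipschitz assumption on the inputs enters the approximation theorem.
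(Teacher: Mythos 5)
Your proposal follows the same two-step route as the paper: establish continuity of the operator $\mathcal{M}$ and then invoke the universal DeepONet approximation theorem of \cite[Thm.~2.1]{lu2021advectionDeepONet}. The paper's proof is essentially a two-line assertion (``continuity follows from Lemma~\ref{lem1}; apply the theorem''), whereas you supply the actual technical content that this assertion silently relies on: compactness of the admissible coefficient set via Arzel\`a--Ascoli (which is exactly why the Lipschitz hypotheses on the coefficients and on $\lambda',\mu'$ appear in the statement), a Gronwall-type perturbation estimate on the Volterra fixed-point iteration to upgrade Lemma~\ref{lem1}'s mere boundedness to genuine continuity of $(\lambda,\mu,\sigma,\omega,\theta,q)\mapsto(k_1,k_2)$ in $C^1(\mathcal T)$, continuity of the derived quantities $c,\kappa,K_1,\dots,K_4$, and the componentwise $\epsilon/8$ allocation. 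You also correctly isolate the only genuinely delicate step --- the $C^1$-continuity needed so that $K_3,K_4$ (which contain $\partial_x k_i,\partial_\xi k_i$) depend continuously on the data --- and explain why the bounds $B_{\lambda'},B_{\mu'}$ and the Lipschitz assumption are precisely what makes this work. So the argument is correct and matches the paper's strategy, but is considerably more complete than the paper's own proof.
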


\begin{proof}\rm
    The continuity of the operator $\mathcal{M}$ follows from Lemma \ref{lem1}. The result is obtained by invoking \cite[Thm. 2.1]{lu2021advectionDeepONet}. 
\mbox{}\hfill
\end{proof}

\begin{figure}[t]
\centering
\includegraphics[width=0.405\textwidth]{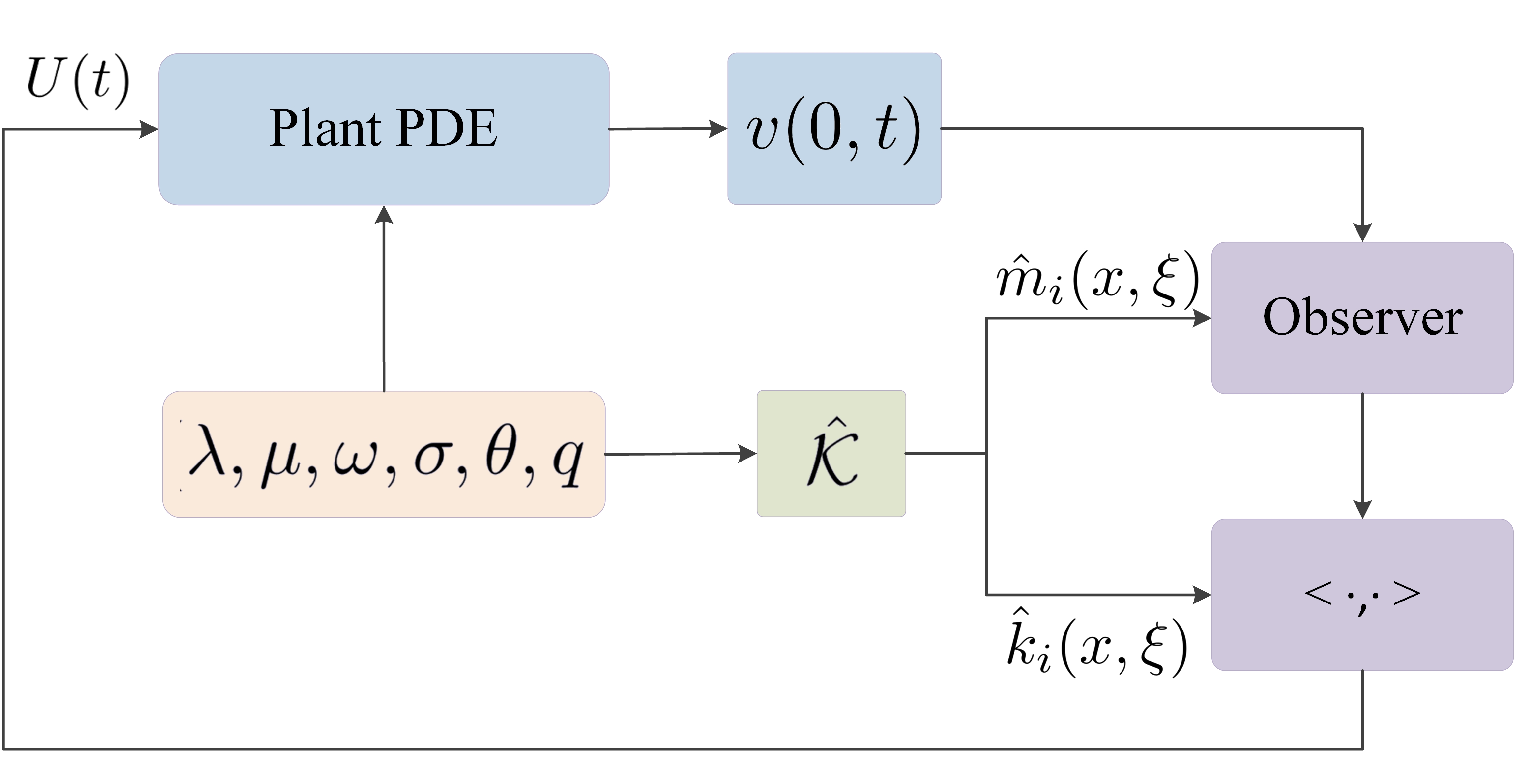}
\vspace{-0.15cm}
\caption{The PDE backstepping observer \eqref{eq:sys_u-obs}--\eqref{eq:sys_BC_2-obs} uses boundary measurement of the flux $v(0,t)$. The gains $\hat k_i$ and $\hat m_i$, $i=1,2$ are produced with the DeepONet $\hat {\mathcal{K}}$.} \label{Learning-process-kernel}
\end{figure}

\section{Stabilization under DeepONet Gain \textcolor{black}{Output} Feedback}\label{Stab-Deep}
The schematic of the control loop is depicted in Figure \ref{Learning-process-kernel}. We will prove that the gain, a priori learned from the DeepOnet layer (offline), enforces closed-loop system stability with a quantifiable exponential decay rate.
The system  in closed-loop form, consists of the observer  \eqref{eq:sys_u-obs}--\eqref{eq:sys_BC_2-obs} in combination with the output-feedback boundary control law 
\begin{align}
U(t)  =&\int_0^1 \hat k_1(1, \xi) \hat u(\xi, t) d\xi +\int_0^1 \hat k_2(1, \xi)\hat v(\xi, t) d\xi.\label{eq:U-hat} 
\end{align}
The backstepping transformations \eqref{eq:backstepping_2}, \eqref{eq:kernel-inverse-m1} and  \eqref{eq:kernel-inverse-m2} fed by the approximate gain kernels $\hat k_i(x,\xi)$ and $\hat m_i(x,\xi)$, $i=1,2$, are defined as
\begin{align}
\hat u(x,t)=&\hat u(x,t),\label{equ-trans-wu}\\
\nonumber\hat z(x, t)=&\hat v (x,t)-\int_0^x \hat k_1(x, \xi) \hat u(\xi,t) d\xi\\
&-\int_0^x \hat k_2(x, \xi) \hat v(\xi,t) d\xi,\label{eq:kernel-inverse-beta-hat}\\
\tilde u(x, t) =&\tilde w(x,t)+\int_0^x \hat m_1(x, \xi) \tilde z(\xi,t) d\xi,\label{eq:kernel-inverse-m1-hat}\\
\tilde v(x, t) =&\tilde z (x,t)+\int_0^x \hat m_2(x, \xi) \tilde z(\xi,t) d\xi,\label{eq:kernel-inverse-m2-hat}
\end{align}
where 
\begin{align}    
&(\hat k_{1}(x,\xi),\hat k_{2}(x,\xi),\hat m_{1}(x,\xi),\hat m_{2}(x,\xi))\nonumber\\ 
=&\hat {\mathcal{K}}(\lambda, \mu,  \sigma, \omega, \theta,
q)(x,\xi).
\end{align} 
Exploiting transformations \eqref{eq:kernel-inverse-beta-hat}--\eqref{eq:kernel-inverse-m2-hat},  the conversion of the observer system \eqref{eq:sys_u-obs}--\eqref{eq:sys_BC_2-obs} and  error system \eqref{eq:sys_u-err}--\eqref{eq:sys_BC_2-err}  into a cascaded target system below is achieved
\begin{align}
\nonumber&\partial_t\hat u(x,t)=-\lambda(x) \partial_x \hat u(x,t)+\sigma(x)\hat u(x,t)+\omega(x)\hat z(x,t)\\
\nonumber&~~~~~~+\int_0^x \hat c(x,\xi)\hat u(\xi,t)d\xi +\int_0^x \hat \kappa(x,\xi)\hat z(\xi,t)d\xi\\
&~~~~~~~~+\hat m_1(x,0)\mu(0)\tilde z(0,t),\label{equ-w0-hat}\\
 &\partial_t\hat z(x,t)=\mu(x) \partial_x \hat z(x,t)+\delta_{1}(x)\hat u(x,t)+\delta_{2}(x)\hat z(0,t)\nonumber\\
\nonumber&~~~~~~+\int_0^x \delta_{3}(x,\xi) \hat u(\xi,t)d\xi +\int_0^x \delta_{4}(x,\xi)\hat v(\xi,t)d\xi\\
&~~~~~~+F(x)\mu(0)\tilde z(0,t),\\
&\hat u(0,t)=q\hat z(0,t),\\
&\hat z(1,t)=0,\label{equ-z1}\\
\nonumber&\partial_t \tilde w(x,t)=-\lambda(x) \partial_x \tilde w(x,t)+\sigma(x)\tilde w(x,t)\\
\nonumber&~~~~~~+\int_0^x \hat g(x,\xi) \tilde w(\xi,t)\mathrm d\xi+\int_0^x \delta_5(x,\xi) \tilde z(\xi,t)\mathrm d\xi \\
&~~~~~~ +\int_0^x\int_\xi^x \hat r_1(x,s)\delta_6(s,\xi)\mathrm ds\tilde z(\xi,t)\mathrm d\xi,\\
 &\partial_t\tilde z(x,t)=\mu(x) \partial_x \tilde z(x,t)+\theta(x)\tilde w(x,t)\nonumber\\
\nonumber&~~~~~~+\int_0^x \hat h(x,\xi) \tilde w(\xi,t)\mathrm d\xi+\int_0^x \delta_6(x,\xi) \tilde z(\xi,t)\mathrm d\xi \\
&~~~~~~ +\int_0^x\int_\xi^x \hat r_2(x,s)\delta_6(s,\xi)\mathrm ds\tilde z(\xi,t)\mathrm d\xi,\\
&\tilde w(0,t)=0,\\
&\tilde z(1,t)=0,\label{equ-z1-hat}
\end{align}
where
\begin{align}
\nonumber F(x)&=\hat m_2(x,0)-\int_0^x\hat k_1(x,\xi)\hat m_1(\xi,0)\mathrm d\xi\\
&-\int_0^x\hat k_2(x,\xi)\hat m_2(\xi,0)\mathrm d\xi,\label{equ-F}\\
\hat g(x,\xi)&=-\theta(\xi)\hat m_1(x,\xi)-\theta(\xi)\int_\xi^x\hat m_1(x,s)\hat r_2(s,\xi)ds,\label{hat-g}\\
\hat h(x,\xi)&=-\theta(\xi)\hat m_2(x,\xi)-\theta(\xi)\int_\xi^x\hat m_2(x,s)\hat r_2(s,\xi)ds,\label{hat-h}\\
\hat \kappa(x,\xi)&=\omega(x)\hat k_2(x,\xi)+\int_\xi^x\hat \kappa(x,s)\hat k_2(s,\xi)ds,\label{hat-kap}\\
\hat c(x,\xi)&=\omega(x)\hat k_1(x,\xi)+\int_\xi^x\hat c(x,s)\hat k_1(s,\xi)ds,\label{hat-c}
\end{align}
and  $\delta_i,\ i=1,2,...,6$ are defined as 
\begin{align}
\delta_1(x)&=(\lambda(x)+\mu(x))\tilde k_1(x,x),\label{delta-1}\\
\delta_2(x)&=\lambda(0)q\tilde k_1(x,0)-\mu(0)\tilde k_2(x,0),\label{delta-2}\\
\delta_3(x,\xi)&=\lambda(\xi)'\tilde k_{1}(x,\xi)+\sigma(\xi)\tilde k_{1}(x,\xi)+\theta(\xi)\tilde k_{2}(x,\xi)\nonumber
\\
&-\mu(x) \partial_x \tilde k_{1}(x,y)+\lambda(\xi) \partial_\xi \tilde k_{1}(x,\xi),\label{delta-3}\\
\delta_4(x,\xi)&=-\mu(x) \partial_x \tilde k_{2}(x,y)-\mu(\xi)\partial_\xi \tilde k_{2}(x,\xi)\nonumber\\
&-\mu(\xi)'\tilde k_{2}(x,\xi)+\omega(\xi)\tilde k_{1}(x,\xi),\label{delta-4}\\
\delta_5(x,\xi)&=\lambda(x) \partial_x \tilde m_{1}(x,\xi)-\mu(\xi)\partial_\xi m_1(x,\xi)-\sigma(x)  \nonumber\\
&\cdot \tilde m_{1}(x,\xi)-\omega(x)m_2(x,\xi)+\mu(\xi)'\tilde m_{1}(x,\xi),\label{delta-5}\\
\delta_6(s,\xi)&=-\mu(s) \partial_s \tilde m_{2}(s,\xi)-\mu(\xi)\partial_\xi \tilde m_{2}(s,\xi)\nonumber\\
&-\mu(\xi)'\tilde m_{2}(s,\xi)-\theta(s)\tilde m_{1}(s,\xi).\label{delta-6}
\end{align}
Note that from \eqref{equ-epsilon},  the following inequalities hold
\begin{align}
&\rVert\delta_i\rVert_\infty\leq \epsilon,\quad i=1,2,...,6.
\end{align}
We are now poised to state the exponential stability of the target system \eqref{equ-w0-hat}--\eqref{equ-z1-hat} in its backstepping-transformed variables; under the DeepONet-approximated kernels.

\medskip\begin{propo}\label{prop-0}
{\bf\em [Lyapunov analysis for DeepONet-perturbed target system]} 
Consider the cascaded target system \eqref{equ-w0-hat}--\eqref{equ-z1-hat}, there exists $\epsilon^*>0$ such that for all $\epsilon\in (0,\epsilon^*)$, the following holds,
\begin{align}\label{eq-Psi-exp-bound1}
\Psi_1(t)\leq \Psi_1(0)\vartheta_2 {\rm e}^{-\vartheta_1(\epsilon) t}, \quad \forall \geq 0,
\end{align}
where  $\vartheta_1,\ \vartheta_2>0$ and 
\begin{align}
\Psi_1(t)=\rVert \hat u(t)\rVert^2+\rVert \hat z (t)\rVert^2+\rVert\tilde w(t)\rVert^2+\rVert\tilde z (t)\rVert^2.\label{equ-Psi}
\end{align}
\end{propo}


\begin{proof}\rm
As  a generalization of  the proof in \cite{Meglio2013},  we argue that the following Lyapunov candidate for the target system  \eqref{equ-w0-hat}--\eqref{equ-z1-hat}
\begin{align}
\nonumber &V_1(t) =\int_0^1 \frac{\varrho_1e^{-\varrho_2 x}}{\lambda(x)}\hat u (x, t)^2   dx +\int_0^1  \frac{e^{\varrho_2x}}{\mu(x)}\hat z(x, t)^2 dx\\
  &+\int_0^1 \frac{\varrho_3e^{-\varrho_4x}}{\lambda(x)}\tilde w (x, t)^2   dx +\int_0^1  \frac{\varrho_{5}e^{\varrho_4 x}}{\mu(x)}\tilde z(x, t)^2 dx,\label{eq:Lyap_V1}
\end{align}
where $ \varrho_i>0,\ i=1,2,..,5$ are constants to be decided, provides stability at an exponential decay rate to be determined as well. 

Computing the   time derivative of \eqref{eq:Lyap_V1} along \eqref{equ-w0-hat}--\eqref{equ-z1-hat} as
\begin{align}
\nonumber\dot V_1(t)
\nonumber=&2\int_0^1 \frac{\varrho_1e^{-\varrho_2x}}{\lambda(x)} \hat u(x, t)\bigg(-\lambda(x) \partial_x \hat u(x,t)\\
\nonumber&+\sigma(x)\hat u(x,t)+\omega(x)\hat z(x,t)\\
\nonumber&+\int_0^x \hat c(x,\xi)\hat u(\xi,t)d\xi +\int_0^x \hat \kappa(x,\xi)\hat z(\xi,t)d\xi\\
&+\hat m_1(x,0)\mu(0)\tilde z(0,t)\bigg)+2\int_0^1  \frac{e^{\varrho_2x}}{\mu(x)}\hat z(x, t)\nonumber\\
\nonumber&\cdot\bigg(\mu(x) \partial_x \hat z(x,t)+\delta_{1}(x)\hat u(x,t)\\
\nonumber&+\delta_{2}(x)\hat z(0,t)+\int_0^x \delta_{3}(x,\xi) \hat u(\xi,t)d\xi \nonumber\\
\nonumber&+\int_0^x \delta_{4}(x,\xi)\hat v(\xi,t)d\xi+F(x)\mu(0)\tilde z(0,t)\bigg) dx\\
\nonumber&+2\int_0^1 \frac{\varrho_3e^{-\varrho_4x}}{\lambda(x)} \tilde w(x, t)\bigg(-\lambda(x) \partial_x \tilde w(x,t)\\
\nonumber&+\sigma(x)\tilde w(x,t)+\int_0^x \hat g(x,\xi) \tilde w(\xi,t)d\xi \\
\nonumber&+\int_0^x \delta_5(x,\xi) \tilde z(\xi,t)\mathrm d\xi \\
&+\int_0^x\int_\xi^x \hat r_1(x,s)\delta_6(s,\xi)\mathrm ds\tilde z(\xi,t)\mathrm d\xi\bigg)\nonumber\\
\nonumber&+2\int_0^1  \frac{\varrho_5e^{\varrho_4x}}{\mu(x)}\tilde z(x, t)\bigg(\mu(x) \partial_x \tilde z(x,t)+\theta(x)\tilde w(x,t)\nonumber\\
\nonumber&+\int_0^x \hat h(x,\xi) \tilde w(\xi,t)\mathrm d\xi+\int_0^x \delta_6(x,\xi) \tilde z(\xi,t)\mathrm d\xi \\
& +\int_0^x\int_\xi^x \hat r_2(x,s)\delta_6(s,\xi)\mathrm ds\tilde z(\xi,t)\mathrm d\xi\bigg) dx,
\end{align}
and using integration by parts and Young's inequality, the following estimate is  obtained:
\begin{align}\label{equ-V-dot-law}
\nonumber\dot V_1(t)\leq &-\bigg(\varrho_{1}e^{-\varrho_2}\bigg(\varrho_{2}-\frac{2\bar\sigma+\bar\omega+2\rVert\hat c\rVert_\infty+\rVert\hat \kappa\rVert_\infty}{\underline\lambda}\\
\nonumber&-\frac{\bar \mu\rVert\hat m_1\rVert_\infty}{\underline \lambda}\bigg)-\frac{2\epsilon e^{\varrho_2 }}{\underline \mu}\bigg)\rVert\hat u\rVert^2\\
\nonumber&-\bigg(\varrho_2-\frac{\varrho_1(\bar\omega+\rVert\hat \kappa\rVert_\infty)}{\underline\lambda}-\frac{\bar \mu \bar F}{\underline \mu}-\frac{4\epsilon e^{\varrho_2 }}{\underline\mu}\bigg)\rVert \hat z\rVert^2\\
&-(1-{\varrho_1} q^2-\frac{\epsilon e^{\varrho_2 }}{\underline\mu})\hat z(0, t)^{2}+\frac{\epsilon e^{\varrho_2 }}{\underline\mu}\rVert \hat v\rVert^2\nonumber\\
&+\bigg(\frac{\bar \mu \bar F}{\underline \mu}e^{2\varrho_2}+\frac{\varrho_1\bar \mu\rVert\hat m_1\rVert_\infty}{\underline \lambda}\bigg)\tilde z(0,t)^2\nonumber\\
\nonumber&-\bigg(\varrho_3\bigg(\varrho_4-\frac{2\bar \sigma}{\underline \lambda}-\frac{2\bar \theta\|\hat r_1\|_\infty}{\underline \lambda}\bigg)e^{-\varrho_4}\\
\nonumber&-\frac{\varrho_5\bar \theta(1+\|\hat r_2\|_\infty)}{\underline \mu}e^{2\varrho_4 }-\frac{\varrho_{3}\epsilon(1+\|\hat r_1\|_\infty)}{\underline\lambda}\bigg)\|\tilde w\|^2\\
\nonumber&-\bigg(\varrho_5\bigg(\varrho_{4}-\frac{\bar \theta(1+\|\hat r_2\|_\infty)}{\underline \mu}\bigg)-\frac{4\epsilon e^{\varrho_4 }}{\underline \mu}\\
&-\frac{\varrho_{3}\epsilon(1+\|\hat r_1\|_\infty)}{\underline\lambda}\bigg)\|\tilde  z\|^2-\varrho_{5}\tilde z(0,t)^2,
\end{align}
where  $F(x)\leq \bar F$ is a bounded function and 
\begin{align}
\rVert \hat c\rVert_\infty \leq&\bar\omega\rVert\hat k_1\rVert _{\infty}{\rm e}^{\rVert \hat k_1\rVert_\infty},\\
\rVert \hat \kappa\rVert_\infty \leq&\bar\omega\rVert\hat k_2\rVert _{\infty}{\rm e}^{\rVert \hat k_2\rVert_\infty}.
\end{align}
Since the inverse transformation of the approximated gain kernel  \eqref{eq:backstepping_2-inverse} allows to derive a bound of the norm of the state $v(x,t)$ in \eqref{equ-V-dot-law} with respect to the norm of the approximated target system's state $\hat u(x,t)$ and $\hat z(x,t)$. In other words,
\begin{align}\label{approx-inverse0}
\nonumber \hat v(x, t) =&\hat z(x, t)+\int_0^x \hat l_1(x, \xi)\hat u(\xi, t) d\xi \\
&+\int_0^x \hat l_2(x, \xi)\hat  z(\xi, t) d\xi,
\end{align}
where the inverse kernel $\hat l(x,\xi)$ and  its inverse $\hat k(x,\xi)$ 
satisfy  the following equation 
\begin{align}
\hat l_1(x, \xi) & =\hat k_1(x,\xi)+\int_\xi^x \hat k_2(x, s)\hat  l_1(s, \xi) ds,\label{eq:kernel-inverse-1-hat}\\
\hat l_2(x, \xi) & =\hat k_2(x,\xi)+\int_\xi^x \hat k_2(x, s)\hat  l_2(s,\xi) ds,\label{eq:kernel-inverse-2-hat}
\end{align}
and the following conservative bounds hold
\begin{align}
\rVert \hat l_1\rVert_\infty\leq \rVert \hat k_1\rVert_\infty {\rm e}^{\rVert \hat k_2\rVert_\infty},\label{equ-l1-hat-bound}\\
\rVert \hat l_2\rVert_\infty\leq \rVert \hat k_2\rVert_\infty {\rm e}^{\rVert \hat k_2\rVert_\infty}.\label{equ-l2-hat-bound}
\end{align}
Since 
\begin{align}
\rVert k_{i}-\hat k_{i}\rVert_\infty<\epsilon,
\end{align}
it follows  that 
\begin{align}
\rVert\hat k_{i}\rVert_\infty\leq\rVert k_{i}\rVert_\infty+\epsilon,
\end{align}
and using \eqref{equ-k-bouded}, we derive the following bound
\begin{align}
\rVert\hat k_{i}\rVert_\infty\leq  N_i {\rm e}^{M_i}+\epsilon.\label{equ-ki-bound}
\end{align}
Allowing for the substitution of \eqref{equ-ki-bound} into \eqref{equ-l1-hat-bound} and \eqref{equ-l2-hat-bound} results in the following inequalities  \begin{align}
\rVert \hat l_1\rVert_\infty\leq(N_1 {\rm e}^{M_1}+\epsilon) {\rm e}^{N_2 {\rm e}^{M_2}+\epsilon},\label{equ-l1-bound}\\
\rVert \hat l_2\rVert_\infty\leq(N_2 {\rm e}^{M_2}+\epsilon) {\rm e}^{N_2 {\rm e}^{M_2}+\epsilon}.\label{equ-l2-bound}
\end{align}
Similarly, based on the inverse transformations \eqref{equ-inverse-m2} and \eqref{equ-inverse-m1}, we have 
\begin{align}
\tilde w(x, t) =&\tilde u(x,t)+\int_0^x \hat r_1(x, \xi) \tilde v(\xi,t)\mathrm d\xi,\label{equ-inverse-m1-hat}\\
\tilde z(x,t)=&\tilde v(x,t)+\int_0^x\hat r_2(x,\xi)\tilde v(\xi,t)\mathrm d\xi,\label{equ-inverse-m2-hat}
\end{align}
where the inverse kernel $\hat r_1(x,\xi)$ and  the kernel $\hat r_2(x,\xi)$
satisfy   equations
\begin{align}
\hat r_1(x, \xi) & =\hat m_1(x, \xi)-\int_\xi^x\hat  m_1(x, s) \hat r_2(s, \xi) ds,\label{eq:kernel-inverse-2-r1-hat}\\
\hat r_2(x, \xi) & =-\hat m_2(x,\xi)-\int_\xi^x \hat m_2(x, s) \hat r_2(s,\xi) ds,\label{eq:kernel-inverse-2-r2-hat}
\end{align}
and the following conservative  bounds written below  hold
\begin{align}
\rVert \hat r_1\rVert_\infty \leq&\rVert\hat m_1\rVert _{\infty}{\rm e}^{\rVert \hat m_1\rVert_\infty},\label{equ-r1-hat-bound}\\
\rVert \hat r_2\rVert_\infty \leq&\rVert\hat m_2\rVert _{\infty}{\rm e}^{\rVert \hat m_2\rVert_\infty}.\label{equ-r2-hat-bound}
\end{align}
Knowing that
\begin{align}
\rVert m_{i}-\hat m_{i}\rVert_\infty<\epsilon,
\end{align}
one can deduce that
\begin{align}
\rVert\hat m_{i}\rVert_\infty\leq\rVert m_{i}\rVert_\infty+\epsilon,
\end{align}
and using \eqref{equ-k-bouded} enables one to arrive at 
\begin{align}
\rVert\hat m_{i}\rVert_\infty\leq  N_i {\rm e}^{M_i}+\epsilon.\label{equ-mi-bound}
\end{align}
Substituting  \eqref{equ-mi-bound}
into \eqref{equ-r1-hat-bound} and \eqref{equ-r2-hat-bound} gives
\begin{align}
\rVert \hat r_1\rVert_\infty \leq&(N_1 {\rm e}^{M_1}+\epsilon){\rm e}^{N_1 {\rm e}^{M_1}+\epsilon},\label{equ-r1-bound}\\
\rVert \hat r_2\rVert_\infty \leq&(N_2 {\rm e}^{M_2}+\epsilon){\rm e}^{N_2 {\rm e}^{M_2}+\epsilon}.\label{equ-r2-bound}
\end{align}
Based on \eqref{approx-inverse0},  the following relation holds
\begin{align}
\nonumber\rVert v(t)\rVert^2 =&\int_0^1\bigg(\hat z(x, t)+\int_0^x \hat l_1(x, \xi)\hat u(\xi, t) d\xi\\
\nonumber& +\int_0^x \hat l_2(x, \xi)\hat  z(\xi, t) d\xi\bigg)^2\mathrm dx\\
\leq&3\rVert \hat l_1\rVert_\infty^{2}\rVert \hat u(t)\rVert ^2+3(1+\rVert \hat l_{2}\rVert_\infty^{2})\rVert \hat z(t)\rVert ^2.\label{equ-uv}
\end{align}
Substituting  \eqref{equ-uv} into \eqref{equ-V-dot-law} yields  the following bound
\begin{align}
\nonumber\dot V_1(t)\leq&-\bigg(\varrho_{1}e^{-\varrho_2}\bigg(\varrho_{2}-\frac{2\bar\sigma+\bar\omega+2\rVert\hat c\rVert_\infty+\rVert\hat \kappa\rVert_\infty}{\underline\lambda}\\
\nonumber&-\frac{\bar \mu\rVert\hat m_1\rVert_\infty}{\underline \lambda}\bigg)-\frac{\epsilon e^{\varrho_2 }(2+3\rVert \hat l_1\rVert_\infty^{2})}{\underline\mu}\bigg)\rVert\hat u\rVert^2\\
\nonumber&-\bigg(\varrho_2-\frac{\varrho_1(\bar\omega+\rVert\hat \kappa\rVert_\infty)}{\underline\lambda}-\frac{\bar \mu \bar F}{\underline \mu}-\frac{7\epsilon e^{\varrho_2 }}{\underline\mu}\\
\nonumber&-\frac{3\epsilon e^{\varrho_2 }\rVert \hat l_{2}\rVert_\infty^{2}}{\underline\mu}\bigg)\rVert \hat z\rVert^2-(1-{\varrho_1} q^2-\frac{\epsilon e^{\varrho_2 }}{\underline\mu})\hat z(0, t)^{2}\nonumber\\
\nonumber&-\bigg(\varrho_3\bigg(\varrho_4-\frac{2\bar \sigma}{\underline \lambda}-\frac{2\bar \theta\|\hat r_1\|_\infty}{\underline \lambda}\bigg)e^{-\varrho_4}\\
\nonumber&-\frac{\varrho_5\bar \theta(1+\|\hat r_2\|_\infty)}{\underline \mu}e^{2\varrho_4 }-\frac{\varrho_{3}\epsilon(1+\|\hat r_1\|_\infty)}{\underline\lambda}\bigg)\|\tilde w\|^2\\
\nonumber&-\bigg(\varrho_5(\varrho_{4}-\frac{\bar \theta(1+\|\hat r_2\|_\infty)}{\underline \mu})-\frac{4\epsilon e^{\varrho_2 }}{\underline \mu}\\
\nonumber&-\frac{\varrho_{3}\epsilon(1+\|\hat r_1\|_\infty)}{\underline\lambda}\bigg)\|\tilde z\|^2-\bigg(\varrho_5-\frac{\bar \mu \bar F}{\underline \mu}e^{2\varrho_4}\\
&-\frac{\varrho_1\bar \mu\rVert\hat m_1\rVert_\infty}{\underline \lambda}\bigg)\tilde z(0,t)^2.\label{equ-V-dot-2}
\end{align}
Hence, selecting the parameters for the Lyapunov function $V_1$ as
\begin{align} 
&0<\varrho_1<\min\{\frac{\underline\lambda(\underline \mu \varrho_2-{\bar \mu \bar F})}{\underline \mu(\bar\omega+\rVert\hat \kappa\rVert_\infty)},\frac{1}{q^2}\},\\
&\varrho_{2} >\max\bigg\{\frac{2\bar\sigma+\bar\omega+2\rVert\hat c\rVert_\infty+\rVert\hat \kappa\rVert_\infty+\bar \mu\rVert\hat m_1\rVert_\infty}{\underline\lambda},\  \frac{\bar \mu \bar F}{\underline \mu}\bigg\},\\
 &\varrho_3>\frac{\underline \lambda \varrho_5\bar \theta(1+\|\hat r_2\|_\infty)e^{3\varrho_4 }}{\underline \mu(\varrho_4\underline \lambda-2(\bar \sigma+\bar \theta\|\hat r_1\|_\infty))},\\
&\varrho_4>\max\{\frac{\bar \theta(1+\|\hat r_2\|_\infty)}{\underline \mu},\ \frac{2\bar \sigma+2\bar \theta\|\hat r_1\|_\infty}{\underline \lambda}\},\\
&\varrho_5>\frac{\bar \mu \bar Fe^{2\varrho_4}}{\underline \mu}+\frac{\varrho_1\bar \mu\rVert\hat m_1\rVert_\infty}{\underline \lambda},
\end{align}
one can define $\epsilon^*$ as
\begin{align}
\nonumber\epsilon^*=&\min\bigg\{\frac{\underline\mu \varrho_{1}}{ e^{2\varrho_2 }(2+3\rVert \hat l_1\rVert_\infty^{2})}\bigg(\varrho_{2}-\frac{2\bar\sigma+\bar\omega+2\rVert\hat c\rVert_\infty+\rVert\hat \kappa\rVert_\infty}{\underline\lambda}\\
\nonumber&-\frac{\bar \mu\rVert\hat m_1\rVert_\infty}{\underline \lambda}\bigg),\ \bigg(\frac{\varrho_2\underline\lambda-\varrho_1(\bar\omega+\rVert\hat \kappa\rVert_\infty)}{\underline\lambda}-\frac{\bar \mu \bar F}{\underline \mu}\bigg)\\
\nonumber&\cdot\frac{\underline\mu}{e^{\varrho_2 }(7+3\rVert \hat l_{2}\rVert_\infty^{2})},\  \frac{\varrho_5\underline\lambda(\varrho_{4}\underline \mu-\bar \theta(1+\|\hat r_2\|_\infty))}{4\underline\lambda e^{\varrho_4 }+\underline \mu \varrho_{3}(1+\|\hat r_1\|_\infty)}  \\
\nonumber&\frac{\underline\lambda}{ \varrho_{3}(1+\|\hat r_1\|_\infty)}\bigg(\frac{\varrho_3(\varrho_4\underline \lambda-2(\bar \sigma+\bar \theta\|\hat r_1\|_\infty))e^{-\varrho_4}}{\underline \lambda}\\
&-\frac{\varrho_5\bar \theta(1+\|\hat r_2\|_\infty)}{\underline \mu}e^{2\varrho_4 }\bigg),\ \underline\mu(1-{\varrho_1} q^2) e^{-\varrho_2 }\bigg\},
\end{align}
such that for all  $\epsilon\in (0,\epsilon^*)$,
\begin{align}
\dot V_1(t)\leq -\vartheta_{1}V_1(t),
\end{align}
where $\vartheta_1(\epsilon)$ is defined by
\begin{align}
\nonumber \vartheta_{1}(\epsilon)=&\min\bigg\{\frac{\underline \lambda}{\varrho_1}\bigg(\varrho_{1}e^{-\varrho_2}\bigg(\varrho_{2}-\frac{2\bar\sigma+\bar\omega+2\rVert\hat c\rVert_\infty+\rVert\hat \kappa\rVert_\infty}{\underline\lambda}\\
\nonumber&-\frac{\bar \mu\rVert\hat m_1\rVert_\infty}{\underline \lambda}\bigg)-\frac{\epsilon e^{\varrho_2 }(2+3\rVert \hat l_1\rVert_\infty^{2})}{\underline\mu}\bigg)\bigg),\\
\nonumber&\frac{\underline \mu}{e^{\varrho_2}}\bigg(\varrho_2-\frac{\varrho_1(\bar\omega+\rVert\hat \kappa\rVert_\infty)}{\underline\lambda}-\frac{\bar \mu \bar F}{\underline \mu}-\frac{7\epsilon e^{\varrho_2 }}{\underline\mu}\\
\nonumber&-\frac{3\epsilon e^{\varrho_2 }\rVert \hat l_{2}\rVert_\infty^{2}}{\underline\mu}\bigg),\ \frac{\underline \lambda e^{-\varrho_4}}{\varrho_3}\bigg(\varrho_3\bigg(\varrho_4-\frac{2\bar \sigma}{\underline \lambda}-\frac{2\bar \theta\|\hat r_1\|_\infty}{\underline \lambda}\bigg)\\
\nonumber&-\frac{\varrho_5\bar \theta(1+\|\hat r_2\|_\infty)}{\underline \mu}e^{2\varrho_4 }-\frac{\varrho_{3}\epsilon(1+\|\hat r_1\|_\infty)}{\underline\lambda}\bigg),\\
\nonumber&\ \frac{\underline \mu}{\varrho_5e^{\varrho_4}}\bigg(\varrho_5\bigg(\varrho_{4}-\frac{\bar \theta(1+\|\hat r_2\|_\infty)}{\underline \mu}\bigg)-\frac{\varrho_{3}\epsilon(1+\|\hat r_1\|_\infty)}{\underline\lambda}\\
&-\frac{4\epsilon e^{\varrho_4 }}{\underline \mu}\bigg)\bigg\},\label{equ-vartheta-1}
\end{align}
which leads to the following inequality \begin{align}
V_1(t)\leq V_1(0)e^{-\vartheta_1(\epsilon) t}.
\end{align} 
From \eqref{equ-Psi}, we have 
\begin{align}
 V_1(t) \leq&\max\bigg\{\frac{\varrho_1}{\underline \lambda},\ \frac{\varrho_3}{\underline \lambda}, \ \frac{e^{\varrho_2}}{\underline \mu},\ \frac{\varrho_{5}e^{\varrho_4 }}{\underline \mu} \ \bigg\}\Psi_1(t),\\
\Psi_1(t)\leq &\frac{1}{\min\bigg\{\frac{\varrho_1e^{-\varrho_2 }}{\bar \lambda},\  \frac{\varrho_3e^{-\varrho_4}}{\bar \lambda},\ \frac{1}{\bar\mu},\ \frac{\varrho_{5}}{\bar \mu} \bigg\}}V_1(t). 
\end{align}
Therefore, the  exponential stability bound \eqref{eq-Psi-exp-bound1} holds, and \begin{align}
\nonumber \vartheta_2=&\min\bigg\{\frac{\varrho_1e^{-\varrho_2 }}{\bar \lambda},\  \frac{\varrho_3e^{-\varrho_4}}{\bar \lambda},\ \frac{1}{\bar\mu},\ \frac{\varrho_{5}}{\bar \mu} \bigg\}\\
&\cdot\max\bigg\{\frac{\varrho_1}{\underline \lambda},\ \frac{\varrho_3}{\underline \lambda}, \ \frac{e^{\varrho_2}}{\underline \mu},\ \frac{\varrho_{5}e^{\varrho_4 }}{\underline \mu} \ \bigg\}.
\end{align}
  \hfill
\end{proof}

The following proposition is introduced to state the stability equivalence between the target system and the original closed-loop system.  There is a norm equivalence between Transformations \eqref{eq:kernel-inverse-beta-hat}, \eqref{eq:kernel-inverse-m2-hat}, along with their inverse \eqref{approx-inverse0}, \eqref{equ-inverse-m1-hat} and  \eqref{equ-inverse-m2-hat} help to state the following norm-equivalence properties.

\medskip\begin{propo}\label{propo-norm-equal-0}
{\bf\em [norm equivalence with DeepONet kernels]}
Consider the closed-loop system including the plant \eqref{eq:sys_u}--\eqref{eq:sys_BC_2} with  observer system  \eqref{eq:sys_u-obs}--\eqref{eq:sys_BC_2-obs} and the observer-based controller \eqref{eq:U-hat}.
 There exists $\epsilon^*>0$ such that for all $\epsilon\in (0,\epsilon^*),$ the following estimates hold  between this closed-loop system  and the target system \eqref{equ-w0-hat}--\eqref{equ-z1-hat}, 
\begin{align}
\Psi_1(t)\leq S_1(\epsilon)\Phi_1(t),\quad  \Phi_1(t) \leq S_2(\epsilon)\Psi_1(t),\quad 
\end{align}
where 
\begin{align}
\Phi_1(t)=\rVert u(t)\rVert^2+\rVert   v(t)\rVert^2+\rVert\hat u(t)\rVert^2+\rVert\hat v(t)\rVert^2,\label{equ-Phi-original}
\end{align}
$\Psi_1(t)$ is defined in \eqref{equ-Psi} and  the positive constants as
\begin{align}
\nonumber S_1(\epsilon)=&20+8(N_1 {\rm e}^{M_1}+\epsilon){\rm e}^{N_1 {\rm e}^{M_1}+\epsilon}+8(N_2 {\rm e}^{M_2}+\epsilon)\\
&\cdot {\rm e}^{N_2 {\rm e}^{M_2}+\epsilon}+3(N_1 {\rm e}^{M_1}+N_2 {\rm e}^{M_2}+2\epsilon),\label{equ-S-1-0}\\
\nonumber S_2(\epsilon)=&20+9(N_1 {\rm e}^{M_1}+N_2 {\rm e}^{M_2}+2\epsilon)\ {\rm e}^{ N_2 {\rm e}^{M_2}+\epsilon}+4N_1 {\rm e}^{M_1}\\
&+4N_2 {\rm e}^{M_2}+8\epsilon.\label{equ-S-2-0}
\end{align}
\end{propo}
\begin{proof}\rm
From \eqref{equ-trans-wu}--\eqref{eq:kernel-inverse-m2-hat}, we have 
\begin{align}
\nonumber\Psi_1(t)
\nonumber=&\rVert \hat u(t)\rVert^2+\int_0^1\bigg(\hat v (x,t)-\int_0^x \hat k_1(x, \xi) \hat u(\xi,t) d\xi\\
\nonumber&-\int_0^x \hat k_2(x, \xi) \hat v(\xi,t) d\xi\bigg)^2\mathrm dx\\
\nonumber&+\int_0^1\bigg(\tilde u(x,t)+\int_0^x \hat r_1(x, \xi) \tilde v(\xi,t)\mathrm d\xi\bigg)^2\mathrm dx\\\nonumber&+\int_0^1\bigg(\tilde v(x,t)+\int_0^x \hat r_2(x, \xi) \tilde v(\xi,t)\mathrm d\xi\bigg)^2\mathrm dx\\
\nonumber\leq&\rVert \hat u(t)\rVert^2+3\int_0^1\bigg(\int_0^x \hat k_1(x, \xi) \hat u(\xi,t) d\xi\bigg)^2\mathrm dx\\
\nonumber&+3\rVert \hat v(t)\rVert^2+3\int_0^1\bigg(\int_0^x \hat k_2(x, \xi) \hat v(\xi,t) d\xi\bigg)^2\mathrm dx\\
\nonumber&+2\rVert \tilde u(t)\rVert^2+2\int_0^1\bigg(\int_0^x \hat r_1(x, \xi) \tilde v(\xi,t)\mathrm d\xi\bigg)^2\mathrm dx\\\nonumber&+2\rVert \tilde v(t)\rVert^2+2\int_0^1\bigg(\int_0^x \hat r_2(x, \xi) \tilde v(\xi,t)\mathrm d\xi\bigg)^2\mathrm dx\\
\nonumber\leq&(1+3\rVert\hat k_{1}\rVert_\infty^2)\rVert \hat u(t)\rVert^2+3(1+\rVert\hat k_{2}\rVert_\infty^2)\rVert \hat v(t)\rVert^2\\
&+2\rVert \tilde u(t)\rVert^2+2(1+\rVert\hat r_{1}\rVert_\infty^2+\rVert\hat r_{2}\rVert_\infty^2)\rVert \tilde v(t)\rVert^2.
\end{align}
Since $\tilde u=u-\hat u$ and $\tilde v=v-\hat v$, we have 
\begin{align}
\nonumber\Psi_1(t)&\leq(1+3\rVert\hat k_{1}\rVert_\infty^2)\rVert \hat u(t)\rVert^2+3(1+\rVert\hat k_{2}\rVert_\infty^2)\rVert \hat v(t)\rVert^2\\
\nonumber&+2\rVert u(t)-\hat u(t)\rVert^2+2(1+\rVert\hat r_{1}\rVert_\infty^2+\rVert\hat r_{2}\rVert_\infty^2)\\
\nonumber&\cdot\rVert v(t)-\hat v(t)\rVert^2\\
\nonumber\leq&4\rVert u(t)\rVert^2+(5+3\rVert\hat k_{1}\rVert_\infty^2)\rVert \hat u(t)\rVert^2+4(1+\rVert\hat r_{1}\rVert_\infty^2\\
\nonumber&+\rVert\hat r_{2}\rVert_\infty^2)\rVert v\rVert^2+(7+4\rVert\hat r_{1}\rVert_\infty^2+4\rVert\hat r_{2}\rVert_\infty^2+3\rVert\hat k_{2}\rVert_\infty^2)\\
\nonumber&\cdot\rVert\hat v(t)\rVert^2\\
\leq&(20+3\rVert\hat k_{1}\rVert_\infty^2+3\rVert\hat k_{2}\rVert_\infty^2+8\rVert\hat r_{1}\rVert_\infty^2+8\rVert\hat r_{2}\rVert_\infty^2)\Phi_1(t).\label{equ-Psi1-1}
\end{align}
Submiting \eqref{equ-ki-bound}, \eqref{equ-r1-bound}, and  \eqref{equ-r2-bound} into \eqref{equ-Psi1-1}, it arrivals 
\begin{align}
\nonumber &\Psi_1(t)\leq\left(20+8(N_1 {\rm e}^{M_1}+\epsilon){\rm e}^{N_1 {\rm e}^{M_1}+\epsilon}+8(N_2 {\rm e}^{M_2}+\epsilon)\right.\\
&~~~~~~~~~~\left.\cdot{\rm e}^{N_2 {\rm e}^{M_2}+\epsilon}+3(N_1 {\rm e}^{M_1}+N_2 {\rm e}^{M_2}+2\epsilon)\right)\Phi_1(t).
\end{align}
Similarly, from \eqref{eq:kernel-inverse-m1-hat}, \eqref{eq:kernel-inverse-m2-hat}, and \eqref{approx-inverse0}, we obtain
\begin{align}
\nonumber \Phi_1(t)
\nonumber=&\rVert \hat u(t)+\tilde u(t)\rVert^2+\rVert \hat v(t)+ \tilde v(t)\rVert^2+\rVert\hat u(t)\rVert^2+\rVert \hat v(t)\rVert^2\\
\nonumber\leq&3\rVert \hat u(t)\rVert^2+3\int_0^1\bigg(\hat z(x, t)+\int_0^x \hat l_1(x, \xi)\hat u(\xi, t) d\xi \\
\nonumber&+\int_0^x \hat l_2(x, \xi)\hat z(\xi, t) d\xi\bigg)^2\mathrm dx+2\int_0^1\bigg(\tilde w(x,t)\\
\nonumber&+\int_0^x \hat m_1(x, \xi) \tilde z(\xi,t) d\xi\bigg)^2\mathrm dx+2\int_0^1\bigg(\tilde z (x,t)\\\nonumber&+\int_0^x \hat m_2(x, \xi) \tilde  z(\xi,t) d\xi\bigg)^2\mathrm dx\\
\nonumber\leq&(3+9 \|\hat l_1\|^{2}_\infty)\rVert \hat u(t)\rVert^2+9(1+ \|\hat l_2\|^{2}_\infty)\rVert \hat z (t)\rVert^2 \\
\nonumber&+4\rVert\tilde w(t)\rVert^2+4(1+ \|\hat m_1\|^{2}_\infty+ \|\hat m_2\|^{2}_\infty)\rVert\tilde z (t)\rVert^2\\
\nonumber\leq&(20+9(N_1 {\rm e}^{M_1}+N_2 {\rm e}^{M_2}+2\epsilon)\ {\rm e}^{ N_2 {\rm e}^{M_2}+\epsilon}\\
&+4N_1 {\rm e}^{M_1} +4N_2 {\rm e}^{M_2}+8\epsilon)\Psi_1(t),
\end{align}
which completes the proof.
\mbox{}\hfill
\end{proof}

After establishing the norm-equivalence  in Proposition \ref{propo-norm-equal-0}, the main result immediately follows in Theorem \ref{theo2}. \\
\medskip\begin{theorem}\label{theo2}{\bf\em [main result---stabilization with DeepONet]} 
Consider the closed-loop system consisting of the plant \eqref{eq:sys_u}--\eqref{eq:sys_BC_2}  together with  the observer \eqref{eq:sys_u-obs}--\eqref{eq:sys_BC_2-obs} and the control law \eqref{eq:U-hat}.  Assuming that functions  $\lambda,\ \mu  \in C^1([0, 1])$ have  Lipschitz derivatives, $\sigma,\ \omega,\ \theta  \in C^0([0, 1])$, $
q\in \mathbbm R$, and satisfy $\rVert \lambda \rVert_\infty \leq B_\lambda,\ \rVert \mu \rVert_\infty  \leq  B_\mu,\ \rVert \sigma \rVert_\infty  \leq B_\sigma,\ \rVert \omega \rVert_\infty  \leq B_\omega,\ \rVert \theta \rVert_\infty  \leq B_\theta,\ \rVert \lambda' \rVert_\infty  \leq B_{\lambda'},\ \rVert \mu' \rVert_\infty  \leq  B_{\mu'}$, where  $B_\lambda,\ B_\mu,\ B_\sigma,\ B_\omega,\ B_\theta,\ B_{\lambda'},\ B_{\mu'}>0$ are arbitrarily large constants, there exists a sufficiently small $\epsilon^*(B_\lambda, B_\mu,  B_\sigma,  B_\omega,  B_\theta,  B_{\lambda'},  B_{\mu'})>0$ such that all gain in the feedback law \eqref{eq:U-hat} and the observer system  \eqref{eq:sys_u-obs}--\eqref{eq:sys_BC_2-obs}  with the  neural operator $\hat {\mathcal{M}}(\lambda, \mu, \sigma, \omega, \theta, q)$ of approximation accuracy $\epsilon\in(0,\epsilon^*)$ in relation to the exact backstepping kernels $k_i(x,\xi)$, and $m_i(x,\xi)$, $\ i=1,2$ that ensures the following exponential stability bound 
\begin{align}\label{stabilitybound}
\Phi_1(t)\leq \Phi_1(0){S_1(\epsilon)}{S_2(\epsilon)}\vartheta_2 {\rm e}^{-\vartheta_1(\epsilon) t}\,,
\quad\forall t\geq 0\,,
\end{align}
where,   $\vartheta_1,\ \vartheta_2>0$ are positive constants,   $\Phi_1(t)$, $S_1(\epsilon)$ and $S_2(\epsilon)$ are defined in \eqref{equ-Phi-original}--\eqref{equ-S-2-0}, respectively.
\end{theorem}
\begin{remark}
  \rm  The product $S_1 S_2 $ is the portion of the overshoot which depends on $\epsilon$ and this dependence is clearly increasing, based on \eqref{equ-S-1-0} and \eqref{equ-S-2-0}. It makes sense that poor approximation increases the overshoot estimate. The definition of the decay rate $\vartheta_1,$ as given \eqref{equ-vartheta-1}, shows a decreasing dependence on $\epsilon$, meaning that a poor approximation reduces the decay rate estimate.
\end{remark}


\section{A Fully Learned  Output Feedback Law via DeepONet approximation}\label{Sec5}
\subsection{Summary of the design procedure }

In this section, we present a DeepONet approximation design that enables one to achieve learning of the output-feedback boundary control signal and provide proof-equipped stability guarantees. Exploiting the kernel functions approximation obtained in Section \ref{Stab-Deep}, we design a DeepONet that takes as entries the five plant's parameters $\lambda(x),\ \mu(x),\ \sigma(x),\ \omega(x),\ \theta(x)$ and $q$, as well as the estimates generated by the state observer, namely, $\hat u(x,t)$, $\hat v(x,t).$ The Learning network is built for complete learning of control law restated below for the sake of clarity
\begin{align}
\hat U(t)  =&\int_0^1 \hat k_1(1, \xi) \hat u(\xi, t) d\xi +\int_0^1 \hat k_2(1, \xi)\hat v(\xi, t) d\xi.\label{equ-U-law} 
\end{align}
The structure of the DeepONet-assisted closed-loop system is depicted in  Fig. \ref{Learning-control-law}. The expansion of the mapping $\mathcal{K}$ from a larger space $\mathcal{U}$ to the scalar value of the control input comes at the price of substantial amount of training and learning effort. Furthermore, our result only ensures semi-global practical exponential stability (SG-PES) because as opposed to the approach presented in Section \ref{Stab-Deep}, which only contains multiplicative error, the mapping $\hat U(t)$ as reflected in \eqref{equ-U-law}, involves an additive intermediate linear layer that supplements additive error into the NO approximation process.  We proceed with the three following steps (see Fig. \ref{Learning-control-law}):
\begin{figure}[t]
\centering
\includegraphics[width=0.405\textwidth]{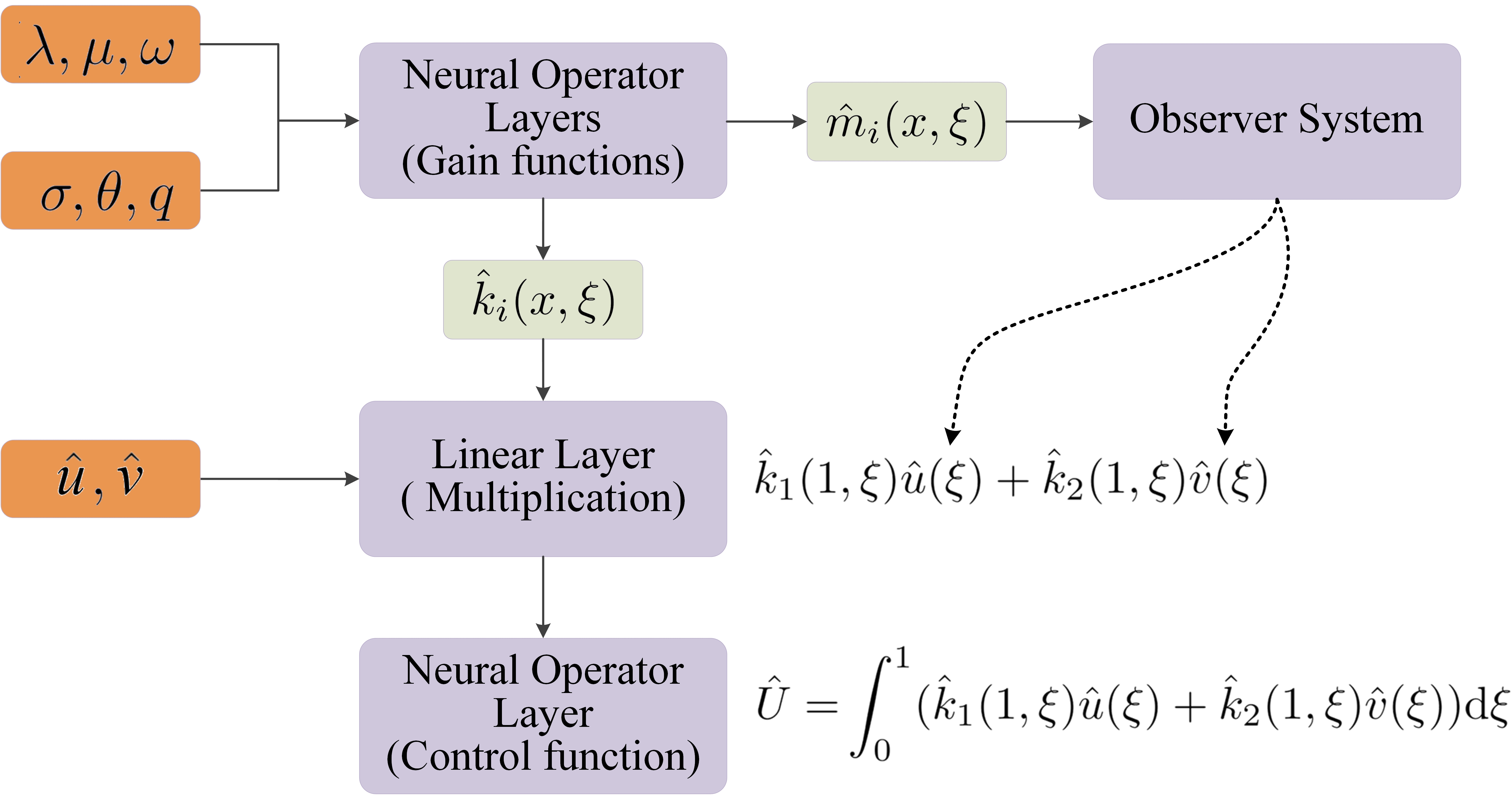}
\vspace{-0.15cm}
\caption{The learning architecture of the observer-based control law in three steps.} \label{Learning-control-law}
\end{figure}

\begin{itemize}   
\item  \textbf{Step 1.} The functions  $\lambda(x),\ \mu(x),\ \sigma(x),\ \omega(x),\ \theta(x),\ q$ remain the inputs of through neural operators $\mathcal{K}$ introduced in Section 4 and generates the NO approximated kernel functions  $\hat k_i(x,\xi)$ and $\hat m_i(x,\xi)$, $i=1,2$. 

\item \textbf{Step 2.}  A linear layer is utilized to multiply the estimated kernel functions $\hat k_i(x,\xi),\ i=1,2$ with the observer's estimates $\hat u$ and $\hat v$.

\item \textbf{Step 3.} A new neural operator ${\mathcal U}:(\lambda, \mu, \sigma, \omega, \theta, q,\hat u,\hat v)\mapsto U$, $(C^1[0,1])^2\times (C^0[0,1])^3\times  \mathbbm R\times(C^0[0,1])^2 \mapsto \mathbb R$, where $U$ is defined in \eqref{eq:U-hat}, is learned to implement the nonlinear integral operation, resulting in the final observer-based control law $\hat U$ given by \eqref{equ-U-law}. This mapping is developed using the DeepONet approximation accuracy theorem recently introduced in  \cite{krstic2023neural} for a reaction-diffusion PDE.

\end{itemize}

The expansion of the mapping $\mathcal{K}$ defined in \textbf{Step 1} from larger space $\mathcal{U}$ to the scalar value of the control input $\hat U (t)$ comes at the price of a substantial amount of training and learning effort. 

Let's denote $\hat{\mathcal{U}}$ the NO approximation of the  output-feedback operator  ${\mathcal U}:(\lambda, \mu, \sigma, \omega, \theta, q,\hat u,\hat v)\mapsto U$, $(C^1[0,1])^2\times (C^0[0,1])^3\times  \mathbbm R\times(C^0[0,1])^2 \mapsto \mathbb R$, and recall the operator ${\mathcal{\hat M}}$ given in Theorem \ref{thm-karniadakis-bkst},  the following theorem holds.

\medskip\begin{theorem}\label{theo-observer-law}
Let $B_\lambda,\ B_\mu,\ B_\sigma,\ B_\omega,\ B_\theta,\ B_{\lambda'},\ B_{\mu'}>0$ be arbitrarily large and $\epsilon>0$, there exists  neural operators ${\mathcal{\hat M}}$ and $\hat{\mathcal  U}$ such that
\begin{align}
\nonumber&|{\mathcal U(\lambda, \mu, \sigma, \omega, \theta, q,\hat u, \hat v)}-\hat{\mathcal U}(\lambda, \mu, \sigma, \omega, \theta, q,\hat  u, \hat v)|\\
&+|{\mathcal{M}}(\lambda, \mu,  \sigma, \omega, \theta,
q)(x,\xi)-\hat{\mathcal{M}}(\lambda, \mu,  \sigma, \omega, \theta,
q)(x,\xi)|<\epsilon,
\end{align}
holds for all Lipschitz $\lambda,\ \mu,\ \sigma,\ \omega,\ \theta,\ \hat u,\ \hat v$ with the properties that $\rVert \lambda \rVert_\infty  \leq B_\lambda,\ \rVert \mu \rVert_\infty  \leq  B_\mu,\ \rVert \sigma \rVert_\infty  \leq B_\sigma,\ \rVert \omega \rVert_\infty  \leq B_\omega,\ \rVert \theta \rVert_\infty  \leq B_\theta,\ \rVert \lambda' \rVert _\infty \leq B_{\lambda'},\ \rVert \mu' \rVert_\infty  \leq  B_{\mu'}$,  $\rVert\hat  u(t)\rVert_\infty\leq B_{u}$, $\rVert\hat  v(t)\rVert_\infty\leq B_{v}$,  namely, there exists  neural operators $\hat {\mathcal{K}}$ such that 
\begin{align}
\nonumber&|\tilde k_1|+|\tilde k_2|+|\tilde c|+|\tilde \kappa|+|(\lambda(x) + \mu(x))\tilde k_1(x, x)|\\
\nonumber&+|\lambda(0)q\tilde k_1(x, 0)-\mu(0)\tilde k_2(x, 0)| \\
\nonumber&+|-\mu(x) \partial_x\tilde  k_1+\lambda(\xi) \partial_\xi \tilde k_1+\lambda^\prime(\xi)\tilde  k_1+\sigma(\xi)\tilde  k_1+\theta(\xi) \tilde k_2|\\
\nonumber&+|-\mu(x) \partial_x\tilde  k_2-\mu(\xi) \partial_\xi \tilde k_2-\mu^\prime(\xi)\tilde  k_2 +\omega(\xi)\tilde  k_1|\\
\nonumber&\textcolor{black}{+|\lambda(x) \partial_x \tilde m_1-\mu(\xi) \partial_\xi \tilde m_1+\mu^\prime(\xi)\tilde m_1-\sigma(\xi) \tilde m_1}-\omega(x) \tilde m_2|\\
\nonumber&\textcolor{black}{+|\mu(x) \partial_x \tilde m_2+\mu(\xi) \partial_\xi \tilde m_2+\mu^\prime(\xi) \tilde m_2}\textcolor{black}{+\theta(\xi) \tilde m_1|}\\
&+|\tilde{\mathcal U}(\lambda, \mu, \sigma, \omega, \theta, q, \hat u, \hat v)|<\epsilon.
\end{align}
\end{theorem}

\begin{proof}\rm
    The continuity of the operator $\mathcal{M}$ follows directly from Lemma \ref{lem1} and that of the operator $\mathcal{U}$ can be established following  \cite[Lem. 4]{bhan2023neural}. The final result is then obtained by invoking \cite[Thm. 2.1]{lu2021advectionDeepONet}.
\mbox{}\hfill
\end{proof}

Theorem \ref{theo-observer-law} is used to prove the stability of \eqref{eq:sys_u}--\eqref{eq:sys_BC_2} combined with the observer system \eqref{eq:sys_u-obs}--\eqref{eq:sys_BC_2-obs} when the approximated output feedback control 
 law  \eqref{equ-U-law} learned through DeepOnet is assigned. By collecting training input-output data generated by a range of constants  $q$ and a family of spatially varying parameter functions  $\lambda(x),\ \mu(x),\ \sigma(x),\ \omega(x),\ \theta(x)$ as well as the observer states $\hat u(x)$ and $\hat v(x)$.

\subsection{Stabilization under output feedback control law generated via DeepONet}


Recalling the NO approximation $\hat{\mathcal{U}},$ the approximated control \eqref{equ-U-law}  can be expressed as    $\hat U=\hat{\mathcal{U}}(\lambda, \mu, \sigma, \omega, \theta, q,\hat u,\hat v).$
Applying the certainty equivalence principle, the approximated backstepping transformations 
\eqref{eq:backstepping_2}, \eqref{eq:kernel-inverse-m1} and  \eqref{eq:kernel-inverse-m2} fed by the $\hat k_i$ and $\hat m_i$, $i=1,2$, are defined as \eqref{eq:kernel-inverse-beta-hat}--\eqref{eq:kernel-inverse-m2-hat}, respectively.
The inverse transformations of
\eqref{eq:kernel-inverse-beta-hat}--\eqref{eq:kernel-inverse-m2-hat} are defined in \eqref{approx-inverse0}, \eqref{equ-inverse-m1-hat} and \eqref{equ-inverse-m2-hat}, respectively.

Using the backstepping transformation \eqref{eq:kernel-inverse-beta-hat}, the observer \eqref{eq:sys_BC_2-obs}--\eqref{eq:sys_u-obs} translates into the following target system
\begin{align}
\nonumber&\partial_t\hat u(x,t)=-\lambda(x) \partial_x \hat u(x,t)+\sigma(x)\hat u(x,t)+\omega(x)\hat z(x,t)\\
\nonumber&~~~~~~+\int_0^x c(x,\xi)\hat u(\xi,t)d\xi +\int_0^x  \kappa(x,\xi)\hat z(\xi,t)d\xi\\
&~~~~~~~~+\hat m_1(x,0)\mu(0)\tilde z(0,t),\label{equ-w0-hat-law}\\
 &\partial_t\hat z(x,t)=\mu(x) \partial_x \hat z(x,t)+F(x)\mu(0)\tilde z(0,t),\\
&\hat u(0,t)=q\hat z(0,t),\\
&\hat z(1,t)=\tilde U(t),\label{equ-z1-law}
\end{align}
where $\hat \kappa(x,\xi),\ \hat c(x,\xi)$ and $F(x)$ are defined in \eqref{hat-kap}, \eqref{hat-c}, and \eqref{equ-F}, respectively. The approximation error terms,  $\delta_i,\ i=1,2,3,4$  are given in \eqref{delta-1}--\eqref{delta-4}, 
and $\tilde U(t)=U(t)-\hat U(t)$.
We recall that  $U(t)$, the approximated control law  \eqref{equ-U-law}, is obtained from an approximation of the gain kernel when functions parameters $\lambda(x),\ \mu(x),\ \sigma(x),\ \omega(x),\ \theta(x)$ vary whereas the full approximation of the feedback law, namely, $\hat U(t)$, requires input-output data of the observer states, namely, $\hat u$ and $\hat v,$  provided some $L^2$ initial data $(u_0(x), v_0(x), \hat u_0(x), \hat v_0(x)).$ It is worth recalling that the estimated state trajectories are derived from a dataset collected at the sensing point $v(0, t)$.


Using  \eqref{eq:kernel-inverse-m1-hat}--\eqref{eq:kernel-inverse-m2-hat}, the error system \eqref{eq:sys_u-err}--\eqref{eq:sys_BC_2-err} maps into the following set of PDEs
\begin{align}
\nonumber&\partial_t \tilde w(x,t)=-\lambda(x) \partial_x \tilde w(x,t)+\sigma(x)\tilde w(x,t)\\
\nonumber&~~~~~~+\int_0^x \hat g(x,\xi) \tilde w(\xi,t)\mathrm d\xi+\int_0^x \delta_5(x,\xi) \tilde z(\xi,t)\mathrm d\xi \\
&~~~~~~ +\int_0^x\int_\xi^x \hat r_1(x,s)\delta_6(s,\xi)\mathrm ds\tilde z(\xi,t)\mathrm d\xi,\label{equ-alpha-error}\\
 &\partial_t\tilde z(x,t)=\mu(x) \partial_x \tilde z(x,t)+\theta(x)\tilde w(x,t)\nonumber\\
\nonumber&~~~~~~+\int_0^x \hat h(x,\xi) \tilde w(\xi,t)\mathrm d\xi+\int_0^x \delta_6(x,\xi) \tilde z(\xi,t)\mathrm d\xi \\
&~~~~~~ +\int_0^x\int_\xi^x \hat r_2(x,s)\delta_6(s,\xi)\mathrm ds\tilde z(\xi,t)\mathrm d\xi,\\
&\tilde w(0,t)=0,\\
&\tilde z(1,t)=\tilde U(t).\label{equ-z1-hat-law}
\end{align}
where $\hat g(x,\xi)$, $\hat h(x,\xi)$, $\delta_5(x,\xi)$ and $\delta_6(x,\xi)$ are defined in \eqref{hat-g}, \eqref{hat-h}, \eqref{delta-5} and \eqref{delta-6}, respectively.

Our first result for the coupled target system \eqref{equ-w0-hat-law}--\eqref{equ-z1-law}, \eqref{equ-alpha-error}--\eqref{equ-z1-hat-law} is its semi-global practical exponential stability in the backstepping-transformed variables under the DeepONet-approximated kernels.

\medskip\begin{propo}\label{prop-esti-law}
Consider the cascaded target system \eqref{equ-w0-hat-law}--\eqref{equ-z1-law}, \eqref{equ-alpha-error}--\eqref{equ-z1-hat-law}, there exists $\varepsilon^*>0$ such that for all $\varepsilon\in (0,\varepsilon^*)$, and  the following holds,
\begin{align}
\label{eq-Psi-exp-bound2}
\Psi_2(t)\leq \Psi_2(0)\vartheta_4 {\rm e}^{-\vartheta_3(\epsilon) t}+\vartheta_5\epsilon^2, \quad \forall t\geq 0,
\end{align}
where $\vartheta_i>0$, $i=3,4,5$, and
\begin{align}
\Psi_2(t)=\rVert \hat u(t)\rVert^2+\rVert \hat z (t)\rVert^2+\rVert\tilde w(t)\rVert^2+\rVert\tilde  z(t)\rVert^2.\label{equ-Psi-law}
\end{align}
\end{propo}


\begin{proof}\rm
Define the Lyapunov function 
\begin{align}
\nonumber &V_2(t) =\int_0^1 \frac{\iota _1e^{-\iota _2 x}}{\lambda(x)}\hat u (x, t)^2   dx +\int_0^1  \frac{e^{\iota _2x}}{\mu(x)}\hat z(x, t)^2 dx\\
  &+\int_0^1 \frac{\iota _3e^{-\iota _4x}}{\lambda(x)}\tilde  w(x, t)^2   dx +\int_0^1  \frac{\iota _{5}e^{\iota _4 x}}{\mu(x)}\tilde z(x, t)^2 dx,\label{eq:Lyap_V1-law}
\end{align}
where $\iota _i>0,\ i=1,2,..,5$ are constants to be selected. 

Computing the   time derivative of \eqref{eq:Lyap_V1-law} along \eqref{equ-w0-hat-law}--\eqref{equ-z1-law}, \eqref{equ-alpha-error}--\eqref{equ-z1-hat-law} as
\begin{align}
\nonumber\dot V_2(t)
\nonumber=&2\int_0^1 \frac{\iota _1e^{-\iota _2x}}{\lambda(x)} \hat u(x, t)\bigg(-\lambda(x) \partial_x \hat u(x,t)\\
\nonumber&+\sigma(x)\hat u(x,t)+\omega(x)\hat z(x,t)\\
\nonumber&+\int_0^x   c(x,\xi)\hat u(\xi,t)d\xi +\int_0^x   \kappa(x,\xi)\hat z(\xi,t)d\xi\\
&+\hat m_1(x,0)\mu(0)\tilde z(0,t)\bigg)+2\int_0^1  \frac{e^{\iota _2x}}{\mu(x)}\hat z(x, t)\nonumber\\
\nonumber&\cdot\bigg(\mu(x) \partial_x \hat z(x,t)+F(x)\mu(0)\tilde z(0,t)\bigg) dx\\
\nonumber&+2\int_0^1 \frac{\iota _3e^{-\iota _4x}}{\lambda(x)} \tilde w(x, t)\bigg(-\lambda(x) \partial_x \tilde w(x,t)\\
\nonumber&+\sigma(x)\tilde w(x,t)+\int_0^x \hat g(x,\xi) \tilde w(\xi,t)d\xi \\
\nonumber&+\int_0^x \delta_5(x,\xi) \tilde z(\xi,t)\mathrm d\xi \\
&+\int_0^x\int_\xi^x \hat r_1(x,s)\delta_6(s,\xi)\mathrm ds\tilde z(\xi,t)\mathrm d\xi\bigg)dx\nonumber\\
\nonumber&+2\int_0^1  \frac{\iota_5e^{\iota_4x}}{\mu(x)}\tilde z(x, t)\bigg(\mu(x) \partial_x \tilde z(x,t)+\theta(x)\tilde w(x,t)\nonumber\\
\nonumber&+\int_0^x \hat h(x,\xi) \tilde w(\xi,t)\mathrm d\xi+\int_0^x \delta_6(x,\xi) \tilde z(\xi,t)\mathrm d\xi \\
& +\int_0^x\int_\xi^x \hat r_2(x,s)\delta_6(s,\xi)\mathrm ds\tilde z(\xi,t)\mathrm d\xi\bigg) dx,
\end{align}
and using integration by parts and Young's inequality, the following estimate is  derived
\begin{align}
\nonumber\dot V_2(t)\leq &-\iota _{1}e^{-\iota _2}\bigg(\iota _{2}-\frac{2\bar\sigma+\bar\omega+2\rVert c\rVert_\infty+\rVert\kappa\rVert_\infty}{\underline\lambda}\\
\nonumber&-\frac{\bar \mu\rVert\hat m_1\rVert_\infty}{\underline \lambda}\bigg)\rVert\hat u\rVert^2-\bigg(\iota _2-\frac{\iota _1(\bar\omega+\rVert\kappa\rVert_\infty)}{\underline\lambda}\\
&-\frac{\bar \mu \bar F}{\underline \mu}\bigg)\rVert \hat z\rVert^2-(1-{\iota _1} q^2)\hat z(0, t)^{2}+e^{\iota_2}\hat z(1, t)^{2}\nonumber\\
&+\bigg(\frac{\bar \mu \bar F}{\underline \mu}e^{2\iota _2}+\frac{\iota _1\bar \mu\rVert\hat m_1\rVert_\infty}{\underline \lambda}\bigg)\tilde z(0,t)^2\nonumber\\
\nonumber&-\bigg(\iota _3\bigg(\iota _4-\frac{2\bar \sigma}{\underline \lambda}-\frac{2\bar \theta\|\hat r_1\|_\infty}{\underline \lambda}\bigg)e^{-\iota _4}\\
\nonumber&-\frac{\iota _5\bar \theta(1+\|\hat r_2\|_\infty)}{\underline \mu}e^{2\iota _4 }-\frac{\iota _{1}\epsilon(1+\|\hat r_1\|_\infty)}{\underline\lambda}\bigg)\|\tilde w\|^2\\
\nonumber&-\iota _{5}\tilde z(0,t)^2-\bigg(\iota _5(\iota _{4}-\frac{\bar \theta(1+\|\hat r_2\|_\infty)}{\underline \mu})-\frac{4\epsilon e^{\iota _4 }}{\underline \mu}\\
&-\frac{\iota _{3}\epsilon(1+\|\hat r_1\|_\infty)}{\underline\lambda}\bigg)\|\tilde z\|^2+\iota _{5}e^{\iota _4 }\tilde z(1,t)^2,
\end{align}
where
\begin{align}
\rVert  c\rVert_\infty \leq&\bar\omega\rVert  k_1\rVert _{\infty}{\rm e}^{\rVert  k_1\rVert_\infty},\label{equ-c-bound}\\
\rVert  \kappa\rVert_\infty \leq&\bar\omega\rVert  k_2\rVert _{\infty}{\rm e}^{\rVert  k_2\rVert_\infty}.\label{equ-kappa-bound}
\end{align}
Substituting  \eqref{equ-k-bouded} into \eqref{equ-c-bound} and \eqref{equ-kappa-bound}, we  get \begin{align}
\rVert c\rVert_\infty\leq\bar\omega N_1 {\rm e}^{M_1+N_1 {\rm e}^{M_1}},\label{equ-l1-bound-law}\\
\rVert \kappa\rVert_\infty\leq\bar\omega N_2 {\rm e}^{M_2+N_2 {\rm e}^{M_2}}.\label{equ-l2-bound-law}
\end{align}
Since we have designed the neural operator $\mathcal{U}$ to learn the mapping $(\lambda,\mu,\delta,\omega,\theta,u,v)\mapsto U$ such that $\hat U(t)={\mathcal{U}}(\lambda,\mu,\delta,\omega,\theta,q,\hat u(t),\hat v(t))$, we can get $|U(t)-\hat U(t)|\leq \epsilon$. Hence, the estimate below can be derived
\begin{align}
\nonumber\dot V_2(t)=&-\iota _{1}e^{-\iota _2}\bigg(\iota _{2}-\frac{2\bar\sigma+\bar\omega+2\rVert c\rVert_\infty+\rVert\kappa\rVert_\infty}{\underline\lambda}\\
\nonumber&-\frac{\bar \mu\rVert\hat m_1\rVert_\infty}{\underline \lambda}\bigg)\rVert\hat u\rVert^2-\bigg(\iota _2-\frac{\iota _1(\bar\omega+\rVert\kappa\rVert_\infty)}{\underline\lambda}\\
&-\frac{\bar \mu \bar F}{\underline \mu}\bigg)\rVert \hat z\rVert^2-(1-{\iota _1} q^2)\hat z(0, t)^{2}\nonumber\\
&-\bigg(\iota _{5}-\frac{\bar \mu \bar F}{\underline \mu}e^{2\iota _2}-\frac{\iota _1\bar \mu\rVert\hat m_1\rVert_\infty}{\underline \lambda}\bigg)\tilde z(0,t)^2\nonumber\\
\nonumber&-\bigg(\iota _3\bigg(\iota _4-\frac{2\bar \sigma}{\underline \lambda}-\frac{2\bar \theta\|\hat r_1\|_\infty}{\underline \lambda}\bigg)e^{-\iota _4}\\
\nonumber&-\frac{\iota _5\bar \theta(1+\|\hat r_2\|_\infty)}{\underline \mu}e^{2\iota _4 }-\frac{\iota _{1}\epsilon(1+\|\hat r_1\|_\infty)}{\underline\lambda}\bigg)\|\tilde w\|^2\\
\nonumber&-\bigg(\iota _5(\iota _{4}-\frac{\bar \theta(1+\|\hat r_2\|_\infty)}{\underline \mu})-\frac{4\epsilon e^{\iota _4 }}{\underline \mu}\\
&-\frac{\iota _{3}\epsilon(1+\|\hat r_1\|_\infty)}{\underline\lambda}\bigg)\|\tilde z\|^2+(e^{\iota_2}+\iota _{5}e^{\iota _4 })\epsilon^2.
\end{align}
For the time-derivative of the Lyapunov function $V_2(t)$. We can set that
\begin{align} 
&0<\iota_1<\frac{1}{q^2},\\
&\iota_{2} >\max\bigg\{\frac{2\bar\sigma+\bar\omega+2\rVert c\rVert_\infty+\rVert\kappa\rVert_\infty+\bar \mu\rVert\hat m_1\rVert_\infty}{\underline\lambda},\nonumber\\
&~~~~~\frac{\iota _1(\bar\omega+\rVert\kappa\rVert_\infty)}{\underline\lambda}+  \frac{\bar \mu \bar F}{\underline \mu}\bigg\},\\
 &\iota_3>\frac{\underline \lambda \iota_5\bar \theta(1+\|\hat r_2\|_\infty)e^{3\iota_4 }}{\underline \mu(\iota_4\underline \lambda-2(\bar \sigma+\bar \theta\|\hat r_1\|_\infty))},\\
&\iota_4>\max\{\frac{\bar \theta(1+\|\hat r_2\|_\infty)}{\underline \mu},\ \frac{2\bar \sigma+2\bar \theta\|\hat r_1\|_\infty}{\underline \lambda}\},\\
&\iota_5>\frac{\bar \mu \bar Fe^{2\iota_2}}{\underline \mu}+\frac{\iota_1\bar \mu\rVert\hat m_1\rVert_\infty}{\underline \lambda},
\end{align}
one can define $\epsilon^*$ as
\begin{align}
\nonumber\epsilon^*&=\min\bigg\{\frac{\underline\lambda}{ \iota_{1}(1+\|\hat r_1\|_\infty)}\bigg(\frac{\iota_3(\iota_4\underline \lambda-2(\bar \sigma+\bar \theta\|\hat r_1\|_\infty))e^{-\iota_4}}{\underline \lambda}\\
&-\frac{\iota_5\bar \theta(1+\|\hat r_2\|_\infty)}{\underline \mu}e^{2\iota_4 }\bigg),\ \frac{\iota_5\underline\lambda(\iota_{4}\underline \mu-\bar \theta(1+\|\hat r_2\|_\infty))}{4\underline\lambda e^{\iota_2 }+\underline \mu \iota_{3}(1+\|\hat r_1\|_\infty)}\bigg\},
\end{align}
such that for all  $\epsilon\in (0,\epsilon^*)$,
\begin{align}
&\dot V_2(t)\leq -\vartheta_{3}(\epsilon)V_2(t)+(e^{\iota_2}+\iota _{5}e^{\iota _4 })\epsilon^2,
\end{align}
where $\vartheta_3(\epsilon)$ is defined by
\begin{align}
\nonumber \vartheta_{3}(\epsilon)&=\min\bigg\{{\underline \lambda}e^{-\iota_2}\bigg(\iota_{2}-\frac{2\bar\sigma+\bar\omega+2\rVert  c\rVert_\infty+\rVert \kappa\rVert_\infty}{\underline\lambda}\\
\nonumber&-\frac{\bar \mu\rVert\hat m_1\rVert_\infty}{\underline \lambda}\bigg),\frac{\underline \mu}{e^{\iota_2}}\bigg(\iota_2-\frac{\iota_1(\bar\omega+\rVert\kappa\rVert_\infty)}{\underline\lambda}-\frac{\bar \mu \bar F}{\underline \mu}\bigg),\\
\nonumber&\frac{\underline \lambda e^{-\iota_4}}{\iota_3}\bigg(\iota_3\bigg(\iota_4-\frac{2\bar \sigma}{\underline \lambda}-\frac{2\bar \theta\|\hat r_1\|_\infty}{\underline \lambda}\bigg)-\frac{\iota_{3}\epsilon(1+\|\hat r_1\|_\infty)}{\underline\lambda}\\
\nonumber&-\frac{\iota_5\bar \theta(1+\|\hat r_2\|_\infty)}{\underline \mu}e^{2\iota_4 }\bigg),\ \frac{\underline \mu}{\iota_5e^{\iota_4}}\bigg(\iota_5\bigg(\iota_{4}-\frac{\bar \theta(1+\|\hat r_2\|_\infty)}{\underline \mu}\bigg)\\
&-\frac{4\epsilon e^{\iota_4 }}{\underline \mu}-\frac{\iota_{3}\epsilon(1+\|\hat r_1\|_\infty)}{\underline\lambda}\bigg)\bigg\}.
\end{align}
Thus, we have the following inequality 
\begin{align}
V_2(t)\leq V_2(0)e^{-\vartheta_3(\epsilon) t}+\frac{e^{\iota_2}+\iota _{5}e^{\iota _4 }}{\vartheta_3}\epsilon^{2}.\end{align} 
From \eqref{equ-Psi-law}, we have 
\begin{align}
 V_2(t) \leq&\max\bigg\{\frac{\iota_1}{\underline \lambda},\ \frac{\iota_3}{\underline \lambda}, \ \frac{e^{\iota_2}}{\underline \mu},\ \frac{\iota_{5}e^{\iota_4 }}{\underline \mu} \ \bigg\}\Psi_2(t),\\
\Psi_2(t)\leq &\frac{1}{\min\bigg\{\frac{\iota_1e^{-\iota_2 }}{\bar \lambda},\  \frac{\iota_3e^{-\iota_4}}{\bar \lambda},\ \frac{1}{\bar\mu},\ \frac{\iota_{5}}{\bar \mu} \bigg\}}V_2(t). 
\end{align}
Therefore, the  exponential stability bound \eqref{eq-Psi-exp-bound2} holds, and 
\begin{align}
\vartheta_4(\epsilon)=&\frac{e^{\iota_2}+\iota _{5}e^{\iota _4 }}{\vartheta_3(\epsilon)},\\
\nonumber \vartheta_5=&\min\bigg\{\frac{\iota_1e^{-\iota_2 }}{\bar \lambda},\  \frac{\iota_3e^{-\iota_4}}{\bar \lambda},\ \frac{1}{\bar\mu},\ \frac{\iota_{5}}{\bar \mu} \bigg\}\\
&\cdot\max\bigg\{\frac{\iota_1}{\underline \lambda},\ \frac{\iota_3}{\underline \lambda}, \ \frac{e^{\iota_2}}{\underline \mu},\ \frac{\iota_{5}e^{\iota_4 }}{\underline \mu} \ \bigg\},
\end{align}
we have completed this proof.
  \hfill
\end{proof}


To translate the stability of the cascaded target system into that of the original closed-loop system,  we consider transformations \eqref{eq:kernel-inverse-beta-hat}--\eqref{eq:kernel-inverse-m2-hat}, along with inverse transformations \eqref{approx-inverse0}, \eqref{equ-inverse-m1-hat} and \eqref{equ-inverse-m2-hat}, and provide the following proposition.

\medskip\begin{propo}\label{propo-norm-equal}
{\bf\em [norm equivalence with DeepONet kernels]}
Consider the closed-loop system including the plant \eqref{eq:sys_u}--\eqref{eq:sys_BC_2} with  observer system  \eqref{eq:sys_u-obs}--\eqref{eq:sys_BC_2-obs} and the observer-based controller \eqref{equ-U-law}.
 There exists $\epsilon^*>0$ such that for all $\epsilon\in (0,\epsilon^*),$ the following estimates hold  between this closed-loop system  and the cascaded target system  \eqref{equ-w0-hat-law}--\eqref{equ-z1-law}, \eqref{equ-alpha-error}--\eqref{equ-z1-hat-law}, 
\begin{align}
\Psi_2(t)\leq S_1(\epsilon)\Phi_2(t),\quad  \Phi_2(t) \leq S_2(\epsilon)\Psi_2(t),\quad 
\end{align}
where 
\begin{align}
\Phi_2(t)=\rVert u(t)\rVert^2+\rVert v(t)\rVert^2+\rVert \hat u(t)\rVert^2+\rVert \hat v(t)\rVert^2,
\end{align}
and  the positive constants are defined in \eqref{equ-S-1-0} and \eqref{equ-S-2-0}, respectively.

\end{propo}
\begin{proof}\rm
The proof of Proposition \ref{propo-norm-equal} is similar to that of Proposition \ref{propo-norm-equal-0}. From \eqref{eq:kernel-inverse-beta-hat}--\eqref{eq:kernel-inverse-m2-hat}, we have 
\begin{align}
\nonumber\Psi_2(t)
\nonumber=&\rVert \hat u(t)\rVert^2+\int_0^1\bigg(\hat v (x,t)-\int_0^x \hat k_1(x, \xi) \hat u(\xi,t) d\xi\\
\nonumber&-\int_0^x \hat k_2(x, \xi) \hat v(\xi,t) d\xi\bigg)^2\mathrm dx\\
\nonumber&+\int_0^1\bigg(\tilde u(x,t)+\int_0^x \hat r_1(x, \xi) \tilde v(\xi,t)\mathrm d\xi\bigg)^2\mathrm dx\\
\nonumber&+\int_0^1\bigg(\tilde v(x,t)+\int_0^x \hat r_2(x, \xi) \tilde v(\xi,t)\mathrm d\xi\bigg)^2\mathrm dx\\
\nonumber\leq&\rVert \hat u(t)\rVert^2+3\int_0^1\bigg(\int_0^x \hat k_1(x, \xi) \hat u(\xi,t) d\xi\bigg)^2\mathrm dx\\
\nonumber&+3\rVert \hat v(t)\rVert^2+3\int_0^1\bigg(\int_0^x \hat k_2(x, \xi) \hat v(\xi,t) d\xi\bigg)^2\mathrm dx\\
\nonumber&+2\rVert \tilde u(t)\rVert^2+2\int_0^1\bigg(\int_0^x \hat r_1(x, \xi) \tilde v(\xi,t)\mathrm d\xi\bigg)^2\mathrm dx\\\nonumber&+2\rVert \tilde v(t)\rVert^2+2\int_0^1\bigg(\int_0^x \hat r_2(x, \xi) \tilde v(\xi,t)\mathrm d\xi\bigg)^2\mathrm dx\\
\nonumber\leq&(1+3\rVert \hat k_{1}\rVert_\infty^2)\rVert \hat u(t)\rVert^2+3(1+\rVert \hat k_{2}\rVert_\infty^2)\rVert \hat v(t)\rVert^2\\
\nonumber&+2\rVert u(t)-\hat u(t)\rVert^2+2(1+\rVert\hat r_{1}\rVert_\infty^2+\rVert\hat r_{2}\rVert_\infty^2)\\
\nonumber&\cdot\rVert v(t)-\hat v(t)\rVert^2\\
\leq&(20+3\rVert \hat k_{1}\rVert_\infty^2+3\rVert \hat k_{2}\rVert_\infty^2+8\rVert\hat r_{1}\rVert_\infty^2+8\rVert\hat r_{2}\rVert_\infty^2)\Phi_2(t).\label{equ-Psi2-1}
\end{align}
Submiting \eqref{equ-k-bouded}, \eqref{equ-r1-bound}, and  \eqref{equ-r2-bound} into \eqref{equ-Psi2-1}, it arrivals 
\begin{align}
\nonumber\Psi_2(t)\leq&(20+8(N_1 {\rm e}^{M_1}+\epsilon){\rm e}^{N_1 {\rm e}^{M_1}+\epsilon}+8(N_2 {\rm e}^{M_2}+\epsilon)\\
&\cdot{\rm e}^{N_2 {\rm e}^{M_2}+\epsilon}+3(N_1 {\rm e}^{M_1}+N_2 {\rm e}^{M_2}+2\epsilon))\Phi_2(t).
\end{align}

Similarly, from \eqref{approx-inverse0}, \eqref{equ-inverse-m1-hat} and \eqref{equ-inverse-m2-hat}, we obtain
\begin{align}
\nonumber \Phi_2(t)
\nonumber\leq&3\rVert \hat u(t)\rVert^2+3\rVert \hat v(t)\rVert^2+2\rVert\tilde u(t)\rVert^2+2\rVert \tilde v(t)\rVert^2\\
\nonumber\leq&3\rVert \hat u(t)\rVert^2+3\int_0^1\bigg(\hat z(x, t)+\int_0^x\hat l_1(x, \xi)\hat u(\xi, t) d\xi \\
\nonumber&+\int_0^x\hat l_2(x, \xi)\hat  z(\xi, t) d\xi\bigg)^2\mathrm dx+2\int_0^1\bigg(\tilde w(x,t)\\
\nonumber&+\int_0^x \hat m_1(x, \xi) \tilde z(\xi,t) d\xi\bigg)^2\mathrm dx+2\int_0^1\bigg(\tilde z (x,t)\\\nonumber&+\int_0^x \hat m_2(x, \xi) \tilde z(\xi,t) d\xi\bigg)^2\mathrm dx\\
\nonumber\leq&(3+9 \| \hat l_1\|^{2}_\infty)\rVert \hat u(t)\rVert^2+9(1+ \|\hat l_2\|^{2}_\infty)\rVert \hat z(t)\rVert^2 \\
\nonumber&+4\rVert\tilde w(t)\rVert^2+4(1+ \|\hat m_1\|^{2}_\infty+ \|\hat m_2\|^{2}_\infty)\rVert\tilde  z(t)\rVert^2\\
\nonumber\leq&(20+9(N_1 {\rm e}^{M_1}+N_2 {\rm e}^{M_2}+2\epsilon)\ {\rm e}^{ N_2 {\rm e}^{M_2}+\epsilon}\\
&+4N_1 {\rm e}^{M_1}+4N_2 {\rm e}^{M_2}+8\epsilon)\Psi_2(t).
\end{align}
Thus, we have completed the proof.
\mbox{}\hfill
\end{proof}

With the help of  Proposition  \ref{prop-esti-law} and  \ref{propo-norm-equal}
state we state following theorem.


\begin{theorem}\label{theo3}
 For any $\epsilon<\epsilon^*$  where 
\begin{align}
\epsilon^*:=\frac{\sqrt{(B_u^{2}+B_v^{2}+B_{\hat u}^{2}+B_{\hat v}^{2})}}{\sqrt{{S_2(\epsilon)}\vartheta_5}}>0,
\end{align}
and $\rVert u(0)\rVert ^{2}+\rVert v(0)\rVert ^{2}+\rVert \hat u(0)\rVert ^{2}+\rVert\hat v(0)\rVert ^{2}\leq\zeta$, where
\begin{align}
\zeta:=\frac{S_1(\epsilon)}{S_2(\epsilon)\vartheta_4(\epsilon)}\bigg((B_u^{2}+B_v^2+B_{\hat u}^{2}+B_{\hat v}^{2})-S_2{\vartheta_5}\epsilon^2\bigg)>0,\label{equ-zeta}
\end{align}
the closed-loop system consisting of  the NO approximation of the PDE feedback law \eqref{equ-U-law} and the plant  \eqref{eq:sys_u}--\eqref{eq:sys_BC_2}  and observer system  \eqref{eq:sys_u-obs}--\eqref{eq:sys_BC_2-obs} satisfy the semi-global practical exponential stability estimate,
\begin{align}
\Phi_2(t)\leq&\frac{S_2(\epsilon)}{S_1(\epsilon)}\vartheta_4(\epsilon) e^{-\vartheta_3(\epsilon)  t}\Phi_2(0)+S_2
\vartheta_5
\epsilon^2,  \quad\forall t\geq 0. \label{equ-result}
\end{align}
\end{theorem}

The estimate given by  \eqref{equ-result} is semi-global by permitting the radius $\zeta$ of the initial condition ball in the $L^2[0, 1]$ space to expand as the values of $B_u$, $B_v$, $B_{\hat u}$, and $B_{\hat v}$ increase.  Furthermore,  the size of the training set as well as the number of nodes of the neural network are increasing functions of  $B_u$, $B_v$, $B_{\hat u}$, and $B_{\hat v}$. Even though the stability is semi-global, the region of attraction $\zeta$ defined in \eqref{equ-zeta}, remains considerably smaller than the magnitude of the samples associated with $B_u$, $B_v$, $B_{\hat u}$, and $B_{\hat v}$ in the training set. Moreover, from \eqref{equ-result}, as $t\to\infty$, 
the residual value 
$\Phi_2(t) \leq S_2\vartheta_5\epsilon^2$, 
can be reduced to an arbitrarily small level by decreasing parameter $\epsilon$, and concurrently, by increasing both the training set size and the number of neural network nodes in accordance with the reduction of $\epsilon$. 

\section{Simulation}\label{Simulation}
Our simulation is performed considering a $2\times 2$ linear hyperbolic system with $\lambda(x) = \Gamma x+1$, $\mu(x)=e^{\Gamma x}+2$, $\delta(x)=\Gamma (x+1)$, $\theta(x)=\Gamma (x + 1)$, $\omega(x)=\Gamma(\cosh(x) + 1)$, $q=\Gamma/3$, parameterized by $\Gamma=\{2;5\}$.
Under initial conditions  $u_0(x) =1$, $v_0(x)= \sin(x)$,  the open-loop system of the plant is unstable as shown in Figure \ref{open-loop-u}. 
\begin{figure}
\centering
\includegraphics[width=0.21\textwidth]{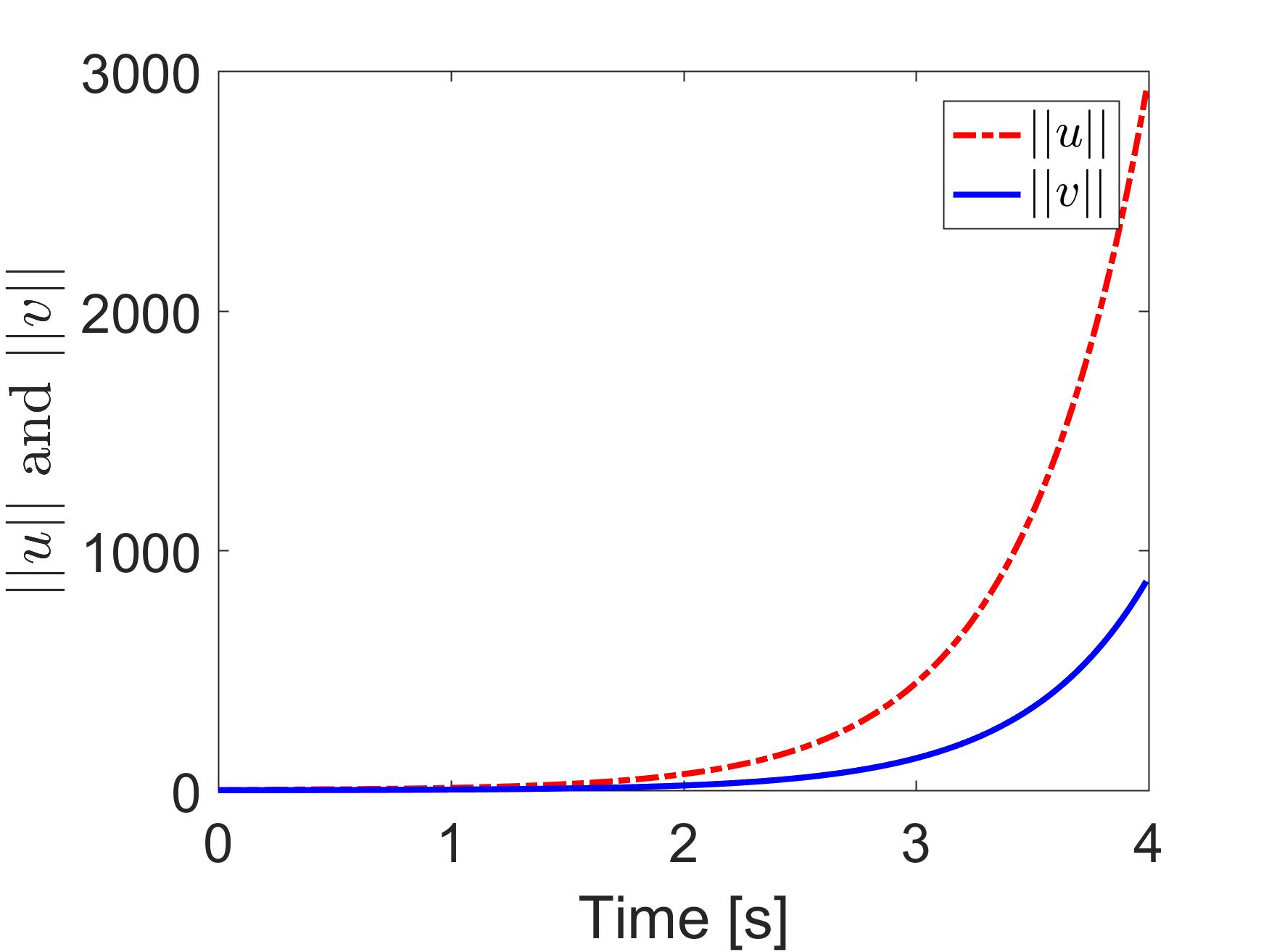}
\includegraphics[width=0.21\textwidth]{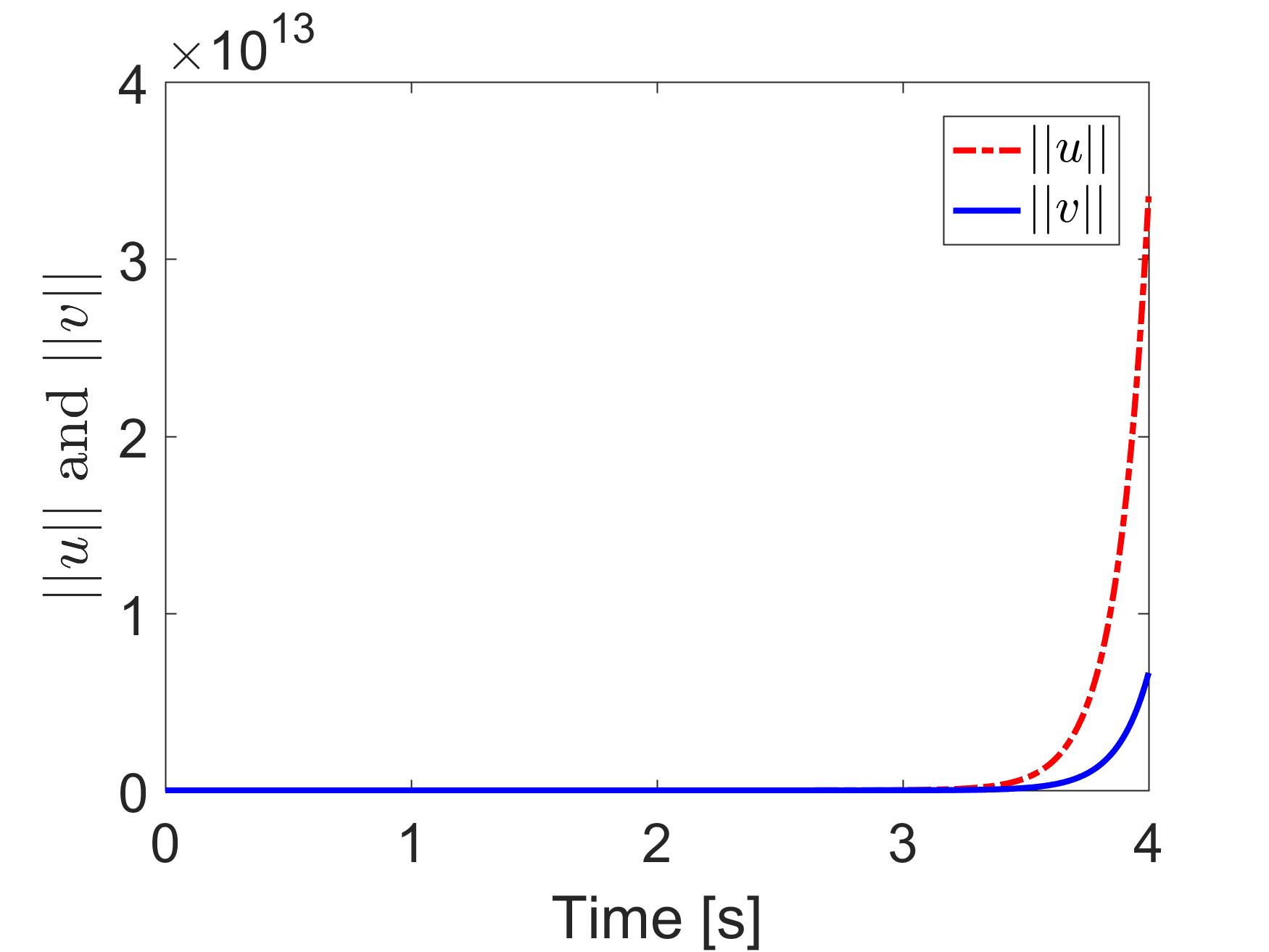}
\vspace{-0.15cm}
\caption{Instability of the uncontrolled plant of $u(x,t)$ and $v(x,t)$ for given  coefficients $\lambda(x) = \Gamma x+1$, $\mu(x)=e^{\Gamma x}+2$, $\delta(x)=\Gamma (x+1)$, $\theta(x)=\Gamma (x + 1)$, $\omega(x)=\Gamma(\cosh(x) + 1)$, $q=\Gamma/3$, $\Gamma=2,\ 5$.} \label{open-loop-u}
\end{figure}
By iterating the functions $\lambda(x)$, $\mu(x)$, $\delta(x)$, $\theta(x)$, and $\omega(x)$ along the y-axis to generate a two-dimensional (2D) input for the $\mathcal{K}$ network, the DeepONet is developed without modifying the grid structure. Similarly, the constant $q$ is iterated along both  $x$ and $y$ coordinates to generate additional 2D inputs for the $\mathcal{K}$ network. In summary, this methodology results in six distinct 2D inputs for the network. Our approach capitalizes on this 2D structure by integrating a Convolutional Neural Network (CNN) into the branch network of the DeepONet. Exploiting a  2000 samples dataset, the model demonstrating the highest accuracy in data point classification is identified.
Analytical and learned DeepONet kernels, namely $k_1$, $k_2$, $m_1$, and $m_2$, are depicted in Figures \ref{figure-k1}, \ref{figure-k2}, \ref{figure-m1}, and \ref{figure-m2}. These figures illustrate the kernels' behavior for two distinct values of $\Gamma$, specifically 2 and 5. During the training phase (refer to Fig. \ref{Train-loss}), the relative $L^2$ errors for kernels $k_1$, $k_2$, $m_1$, and $m_2$ were recorded as $4.90\times 10^{-5}$, $3.48\times 10^{-5}$, $6.69\times 10^{-5}$, and $2.61\times 10^{-5}$, respectively. The corresponding testing errors were $5.32\times 10^{-5}$, $3.89\times 10^{-5}$, $7.34\times 10^{-5}$, and $2.62\times 10^{-5}$. 

Furthermore, we simulate the closed-loop system comprising the NO approximation of the PDE feedback law \eqref{equ-U-law}, the plant \eqref{eq:sys_u}--\eqref{eq:sys_BC_2}, and the observer system \eqref{eq:sys_u-obs}--\eqref{eq:sys_BC_1-obs}. Our control law is derived using a pre-designed learning network for the gain kernels, rather than directly from the inputs $\lambda(x), \mu(x), \sigma(x), \omega(x), \theta(x), q, \hat u(x,t)$, and $\hat v(x,t)$. These inputs are processed by neural operators from Section 4 to approximate kernel functions $\hat k_i(x,\xi)$ and $\hat m_i(x,\xi)$, $i=1,2$. These approximations are then linearly combined with observer estimates $\hat u$ and $\hat v$. The final step uses a DeepONet layer to learn the mapping $(\lambda, \mu, \sigma, \omega, \theta, q, \hat u, \hat v) \to \hat U$. With 2000 samples, the model achieves an $L^2$ error of $5.46 \times 10^{-8}$ and a testing error of $5.97 \times 10^{-8}$, as shown in Figure \ref{figure-U}. These simulations corroborate the closed-loop stability under output-feedback control law. That is shown in Figures \ref{figure-u} and \ref{figure-erroru}. Moreover, Figure \ref{figure-observer-error} displays the observer error in closed-loop solutions employing observer kernels $\hat m_1(x,\xi)$ and $\hat m_2(x,\xi)$, along with the control law $\hat U(t)$.
\begin{figure}[t]
\centering
\includegraphics[width=0.405\textwidth]{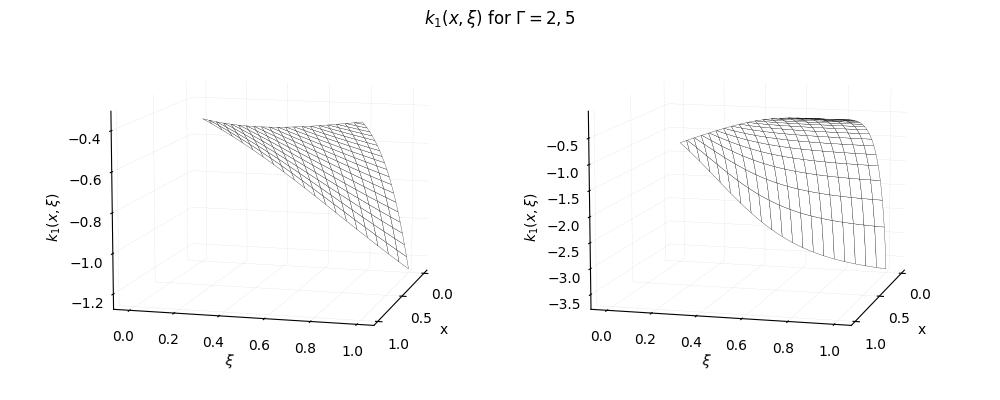}
\includegraphics[width=0.405\textwidth]{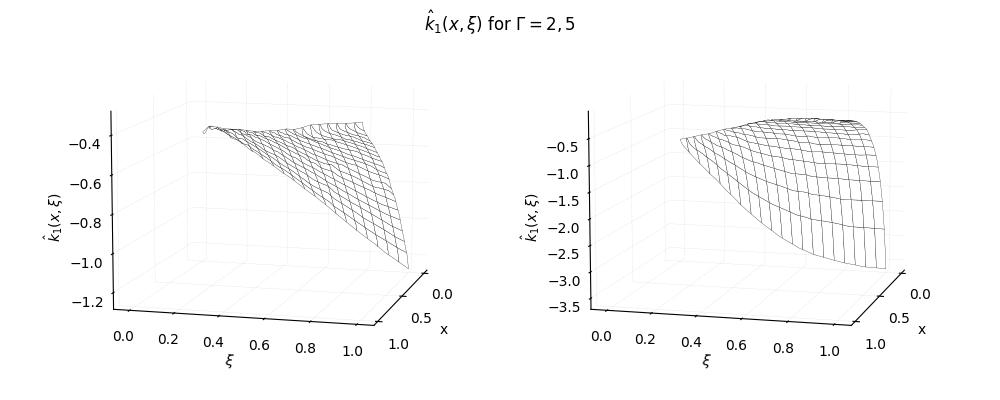}
\includegraphics[width=0.405\textwidth]{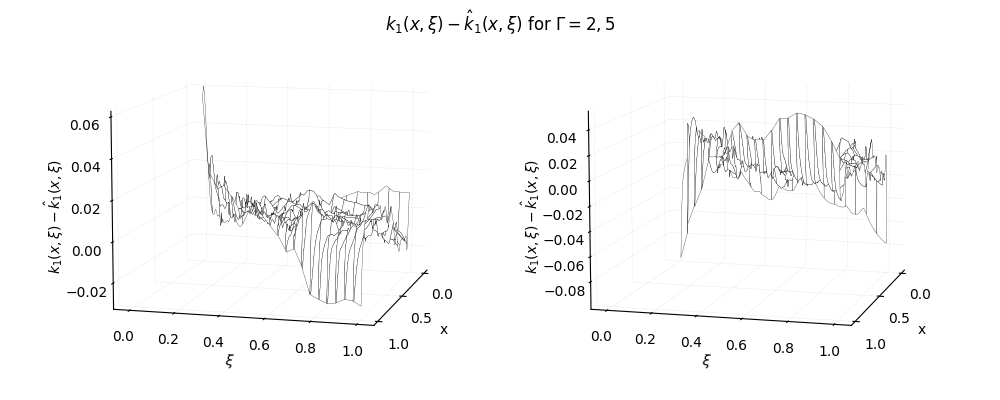}
\vspace{-0.15cm}
\caption{The kernel of $k_1(x,\xi)$, $\hat k_1(x,\xi)$ and $k_1(x,\xi)-\hat k_1(x,\xi)$.} \label{figure-k1}
\end{figure}
\begin{figure}[t]
\centering
\includegraphics[width=0.405\textwidth]{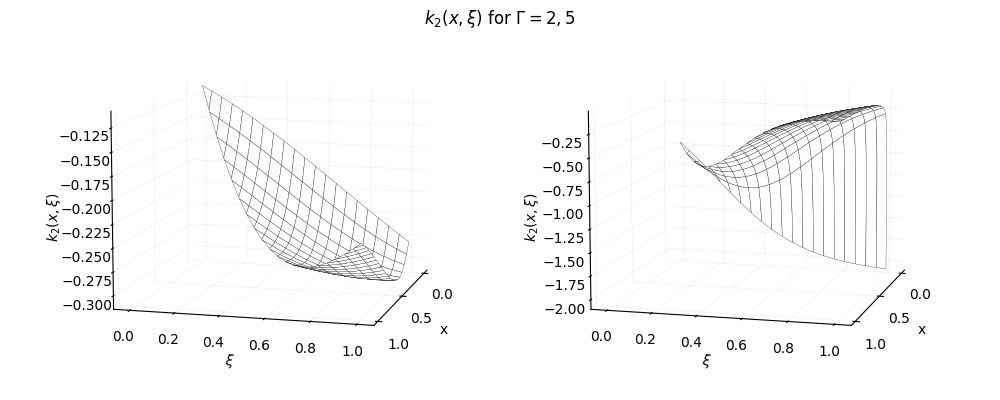}
\includegraphics[width=0.405\textwidth]{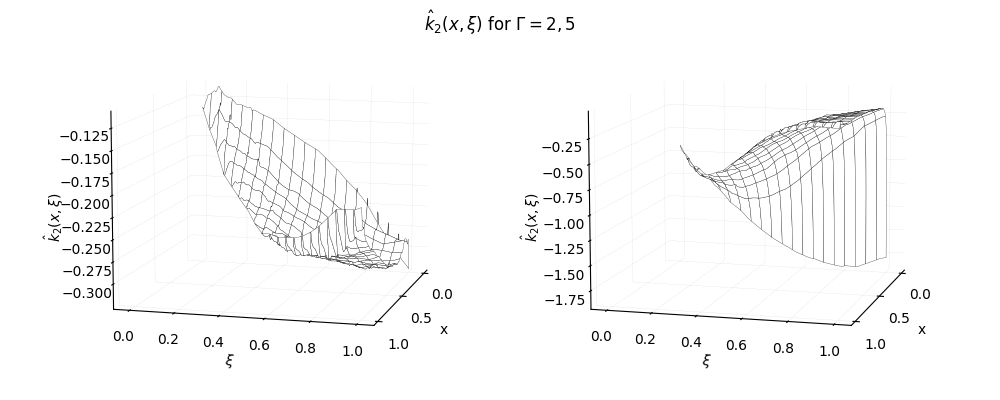}
\includegraphics[width=0.405\textwidth]{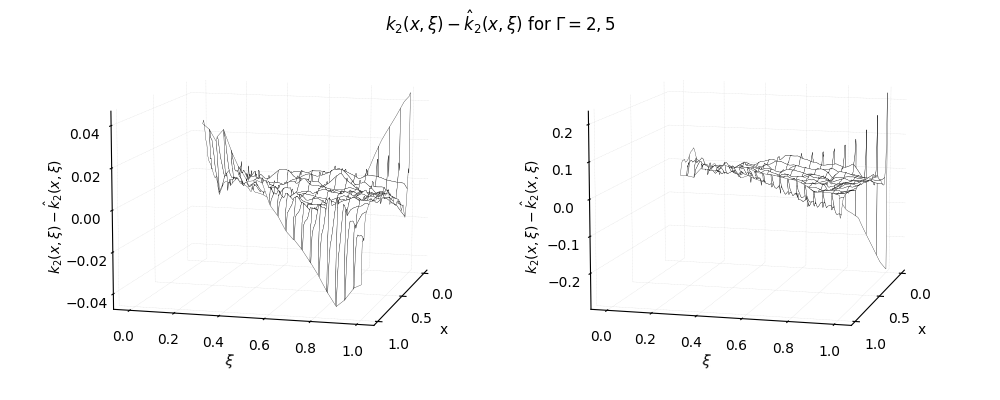}
\vspace{-0.15cm}
\caption{The kernel of $k_2(x,\xi)$, $\hat k_2(x,\xi)$ and $k_2(x,\xi)-\hat k_2(x,\xi)$.}  \label{figure-k2}
\end{figure}
\begin{figure}[t]
\centering
\includegraphics[width=0.405\textwidth]{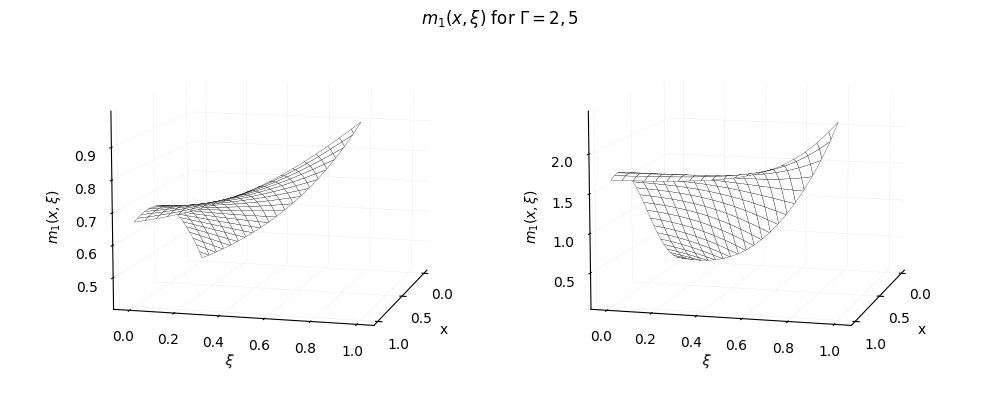}
\includegraphics[width=0.405\textwidth]{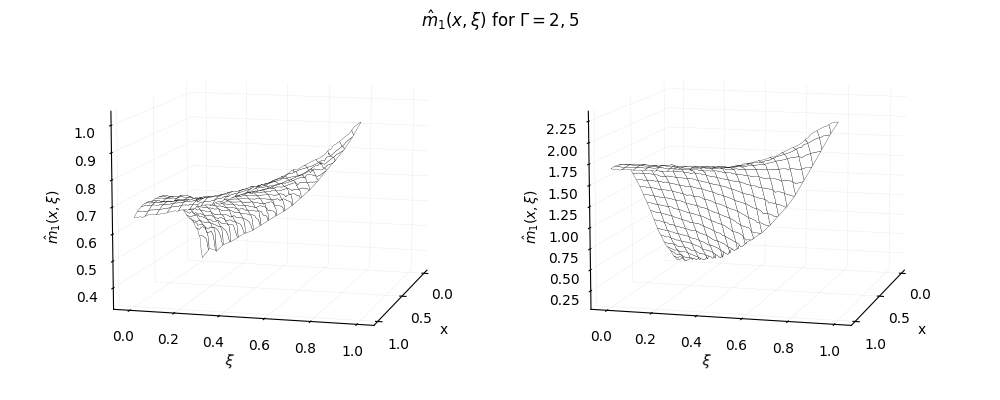}
\includegraphics[width=0.405\textwidth]{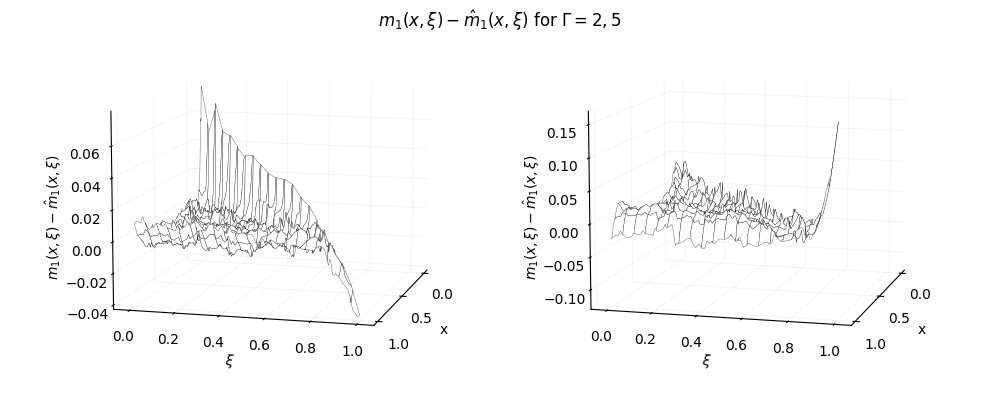}
\vspace{-0.15cm}
\caption{The kernel of $m_1(x,\xi)$, $\hat m_1(x,\xi)$ and $m_1(x,\xi)-\hat m_1(x,\xi)$.} \label{figure-m1}
\end{figure}
\begin{figure}[t]
\centering
\includegraphics[width=0.405\textwidth]{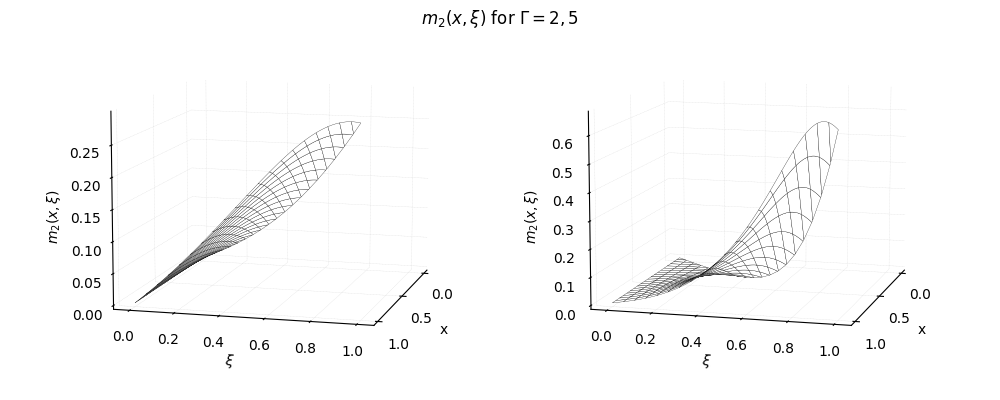}
\includegraphics[width=0.405\textwidth]{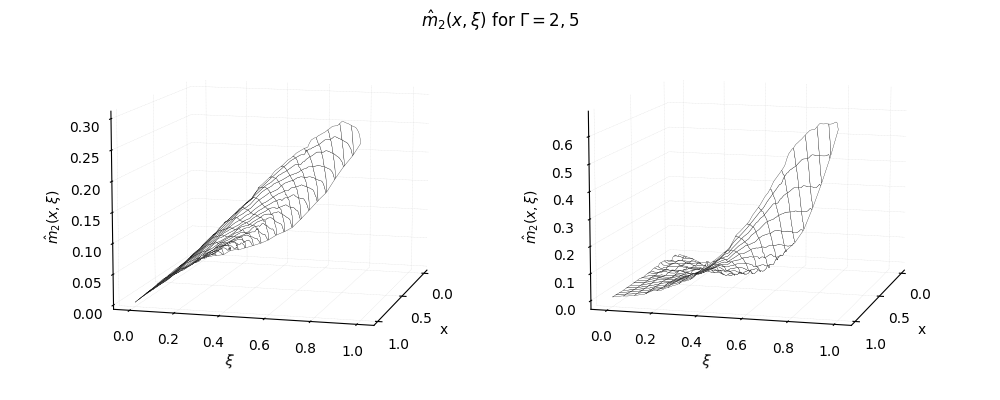}
\includegraphics[width=0.405\textwidth]{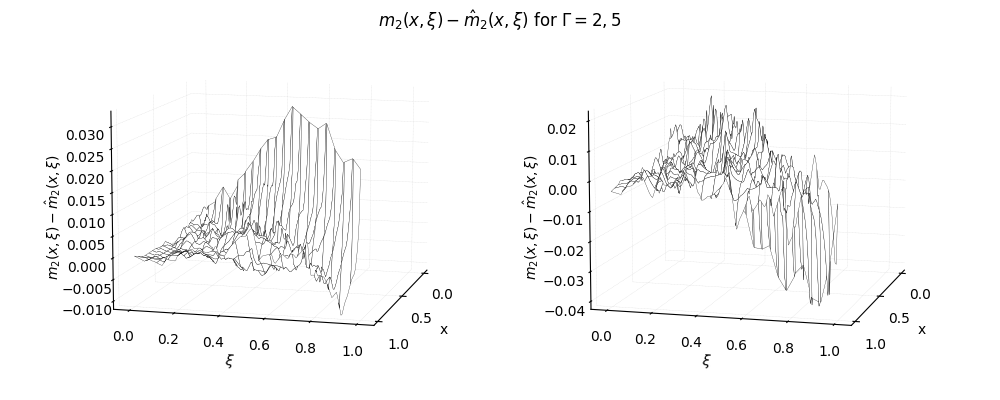}
\vspace{-0.15cm}
\caption{The kernel of $m_2(x,\xi)$, $\hat m_2(x,\xi)$ and $k_2(x,\xi)-\hat m_2(x,\xi)$.}  \label{figure-m2}
\end{figure}
\begin{figure}
\centering
\includegraphics[width=0.405\textwidth]{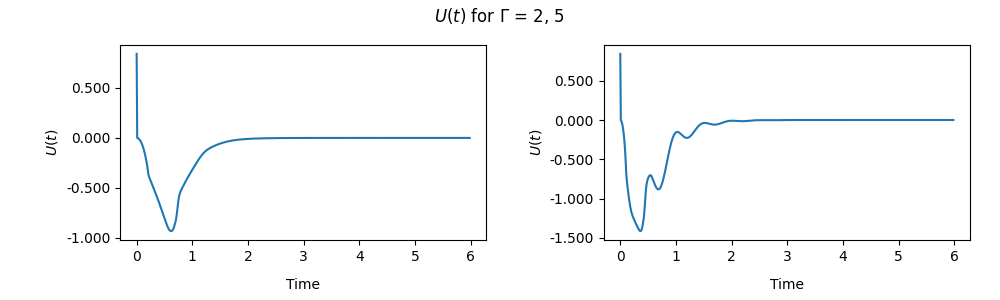}
\includegraphics[width=0.405\textwidth]{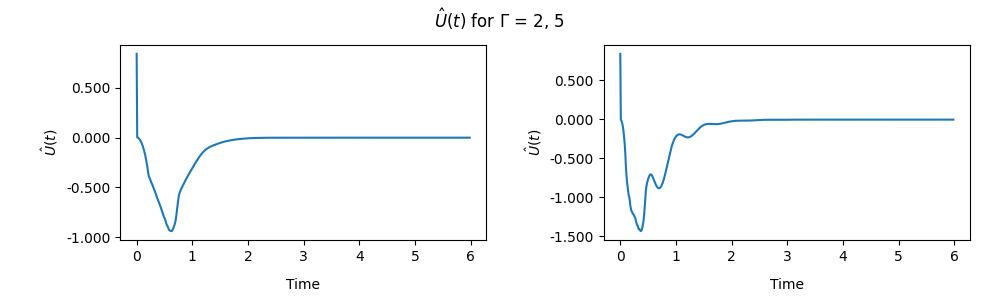}
\includegraphics[width=0.405\textwidth]{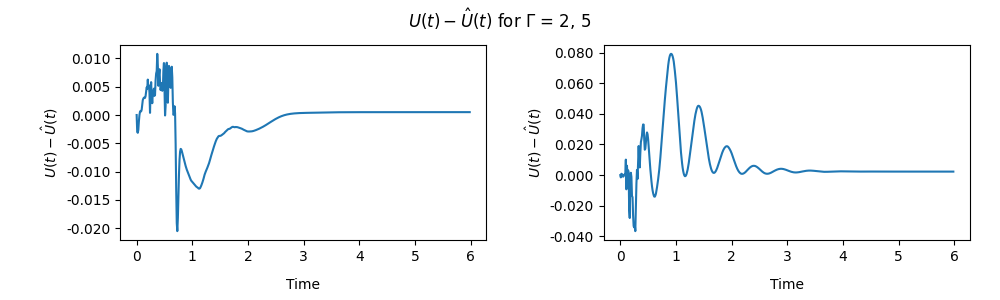}
\caption{Control law  $U$, $\hat U$ and $U-\hat U$.}\label{figure-U}
\end{figure}

\begin{figure}[h]
\centering
\includegraphics[width=0.21\textwidth]{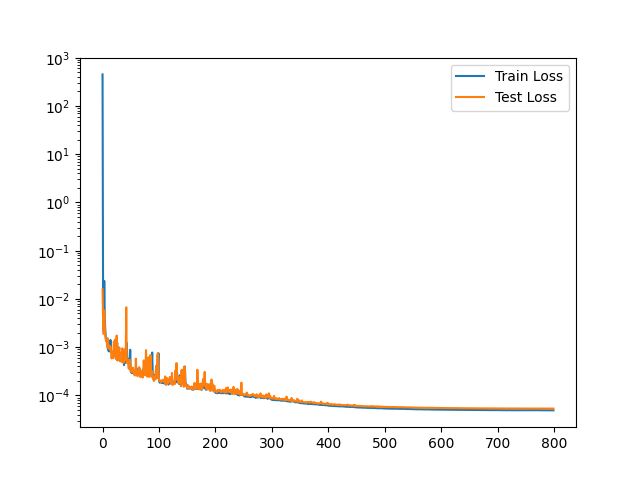}
\includegraphics[width=0.21\textwidth]{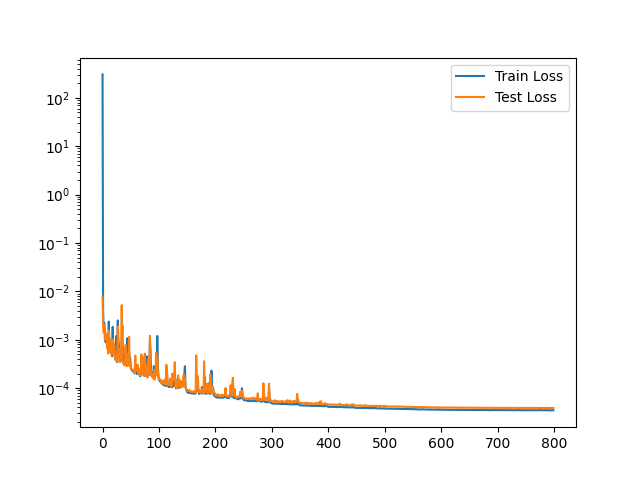}
\includegraphics[width=0.21\textwidth]{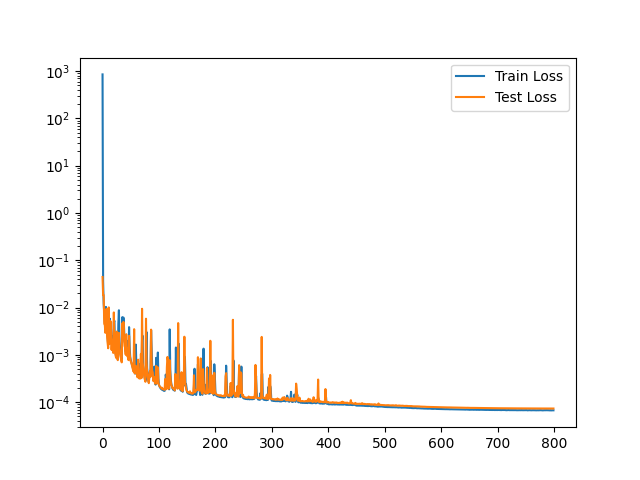}
\includegraphics[width=0.21\textwidth]{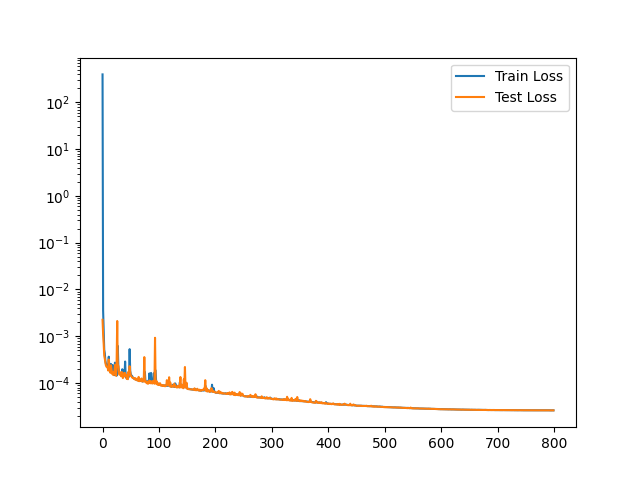}
\vspace{-0.15cm}
\caption{The train and test loss for $(\lambda(x),\mu(x),\omega(x),\sigma(x),$ $\theta(x),q)\to (k_1(x,\xi),\  k_2(x,\xi),\ m_1(x,\xi),\  m_2(x,\xi))$.} \label{Train-loss}
\end{figure}
\begin{figure}[h]
\centering
\includegraphics[width=0.21\textwidth]{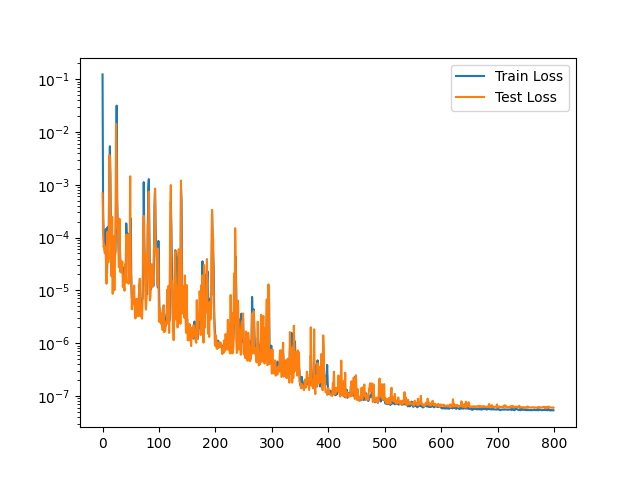}
\vspace{-0.15cm}
\caption{The train and test loss for $(\lambda(x),\mu(x),\omega(x),\sigma(x),$ $\theta(x),q)\to U$.} \label{Train-loss-control}
\end{figure}
\begin{figure}
\centering
\includegraphics[width=0.21\textwidth]{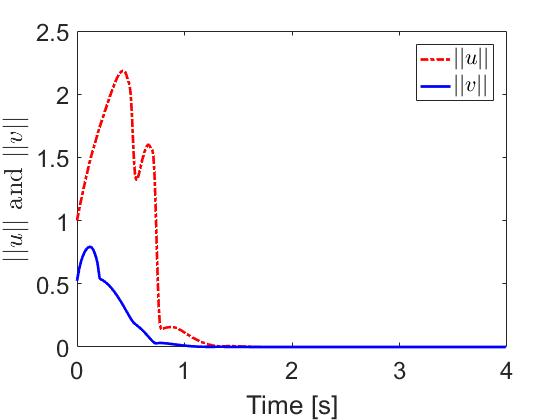}
\includegraphics[width=0.21\textwidth]{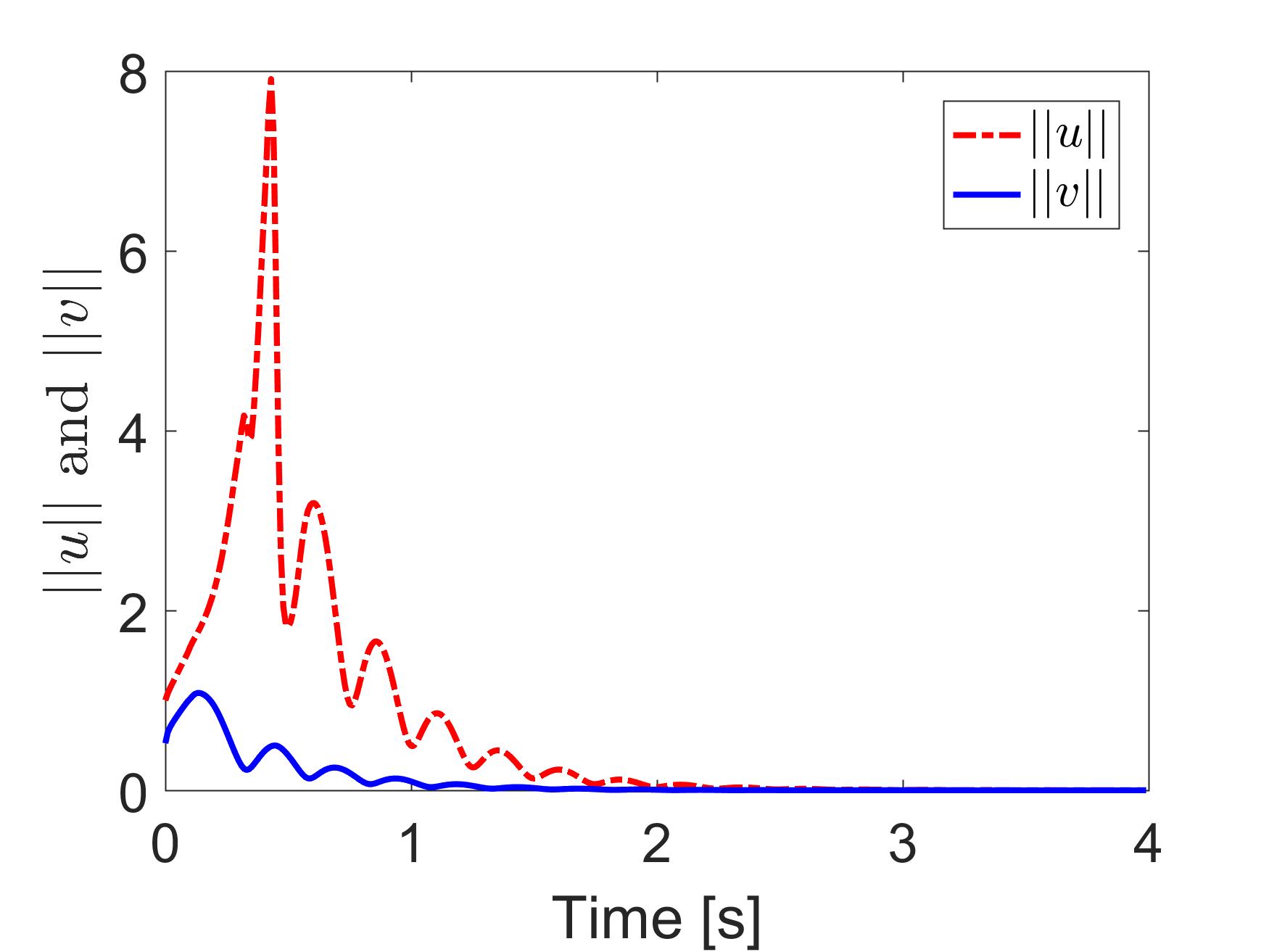}
\includegraphics[width=0.21\textwidth]{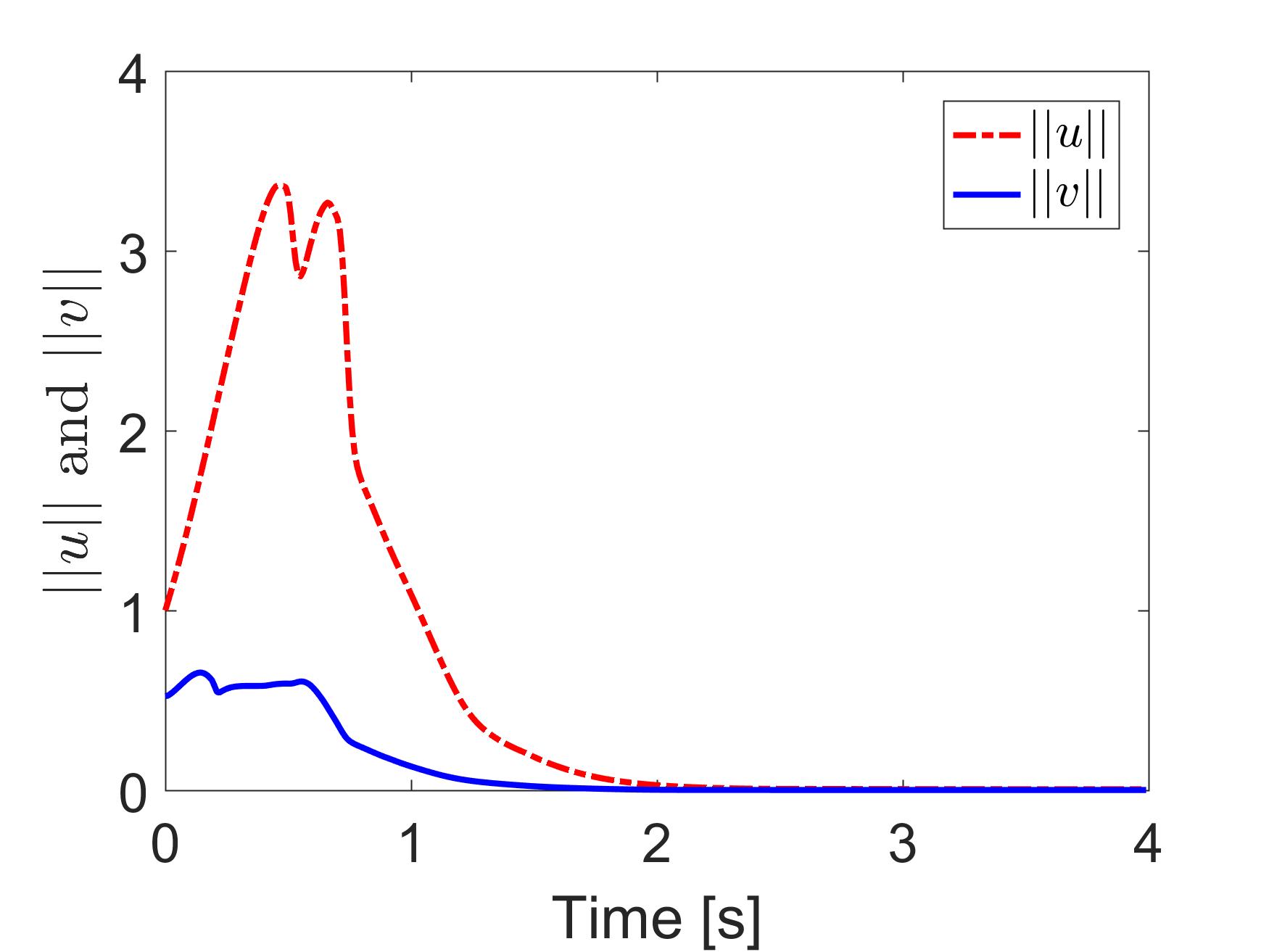}
\includegraphics[width=0.21\textwidth]{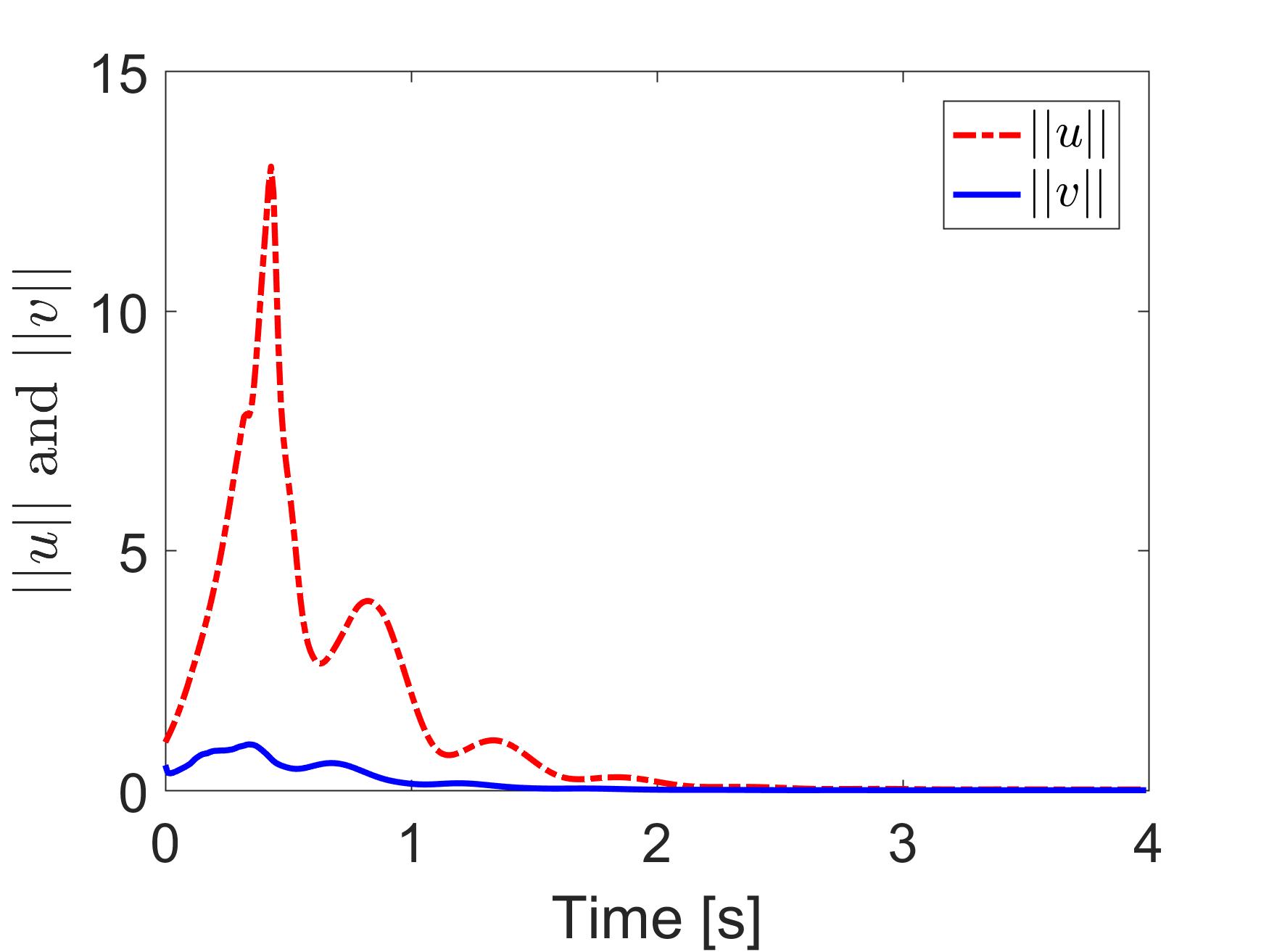}
\vspace{-0.15cm}
\caption{ \textbf{Upper row:}  closed-loop solutions with the observer kernels $m_1(x,\xi)$, $m_2(x,\xi)$, and the control law $U(t)$, respectively. \textbf{Lower row:}  closed-loop solutions with the observer kernels $\hat m_1(x,\xi)$, $\hat m_2(x,\xi)$, and control law $\hat U(t)$.}\label{figure-u}
\end{figure}
\begin{figure}
\centering
\includegraphics[width=0.21\textwidth]{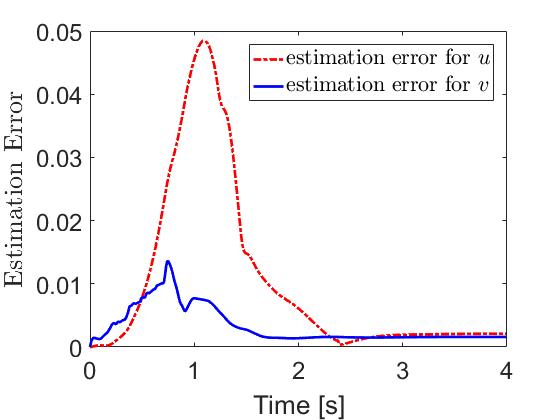}
\includegraphics[width=0.21\textwidth]{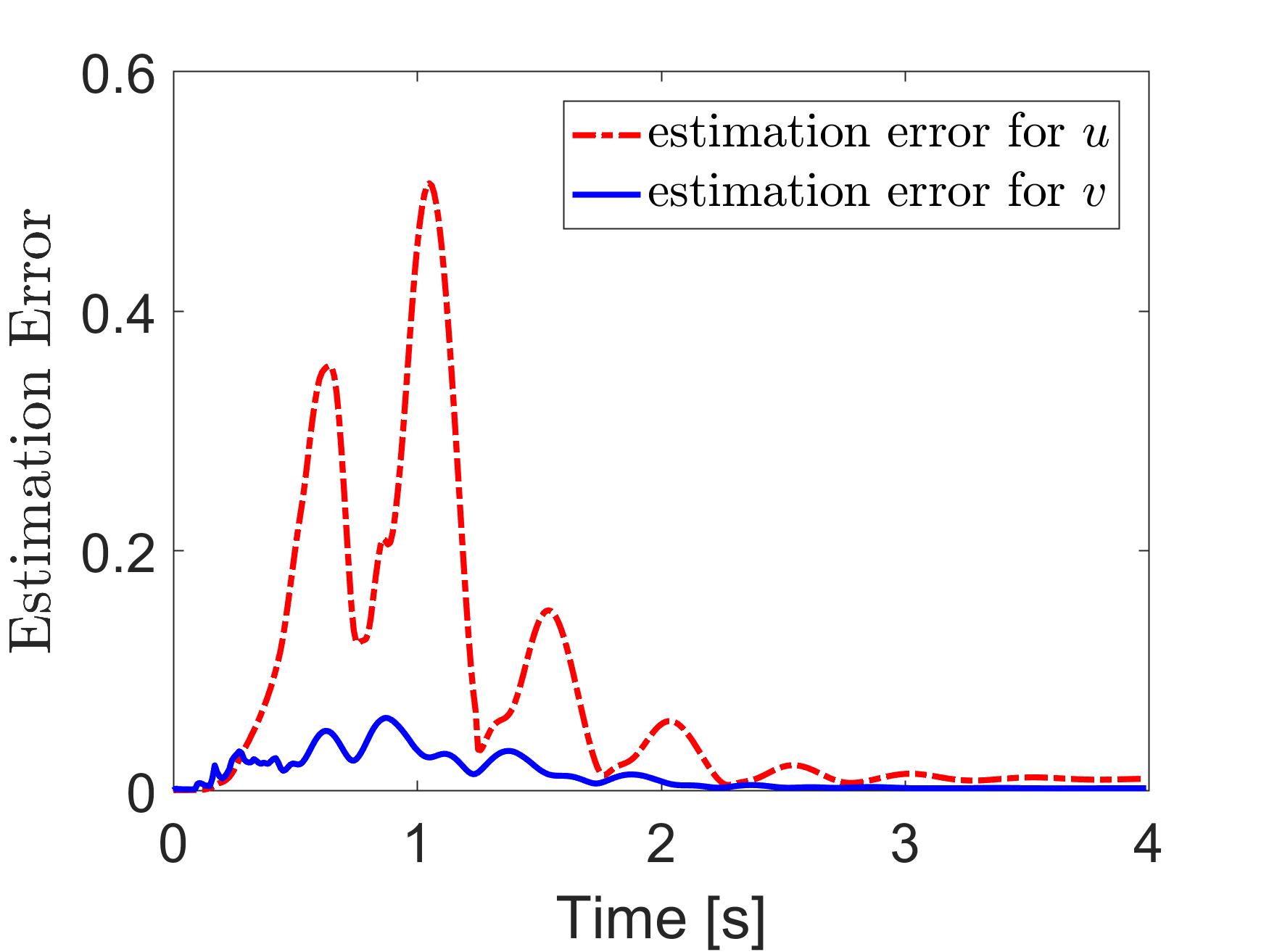}
\vspace{-0.15cm}
\caption{Residual error induced by the DeepONet approximation.}\label{figure-erroru}
\end{figure}
\begin{figure}
\centering
\includegraphics[width=0.21\textwidth]{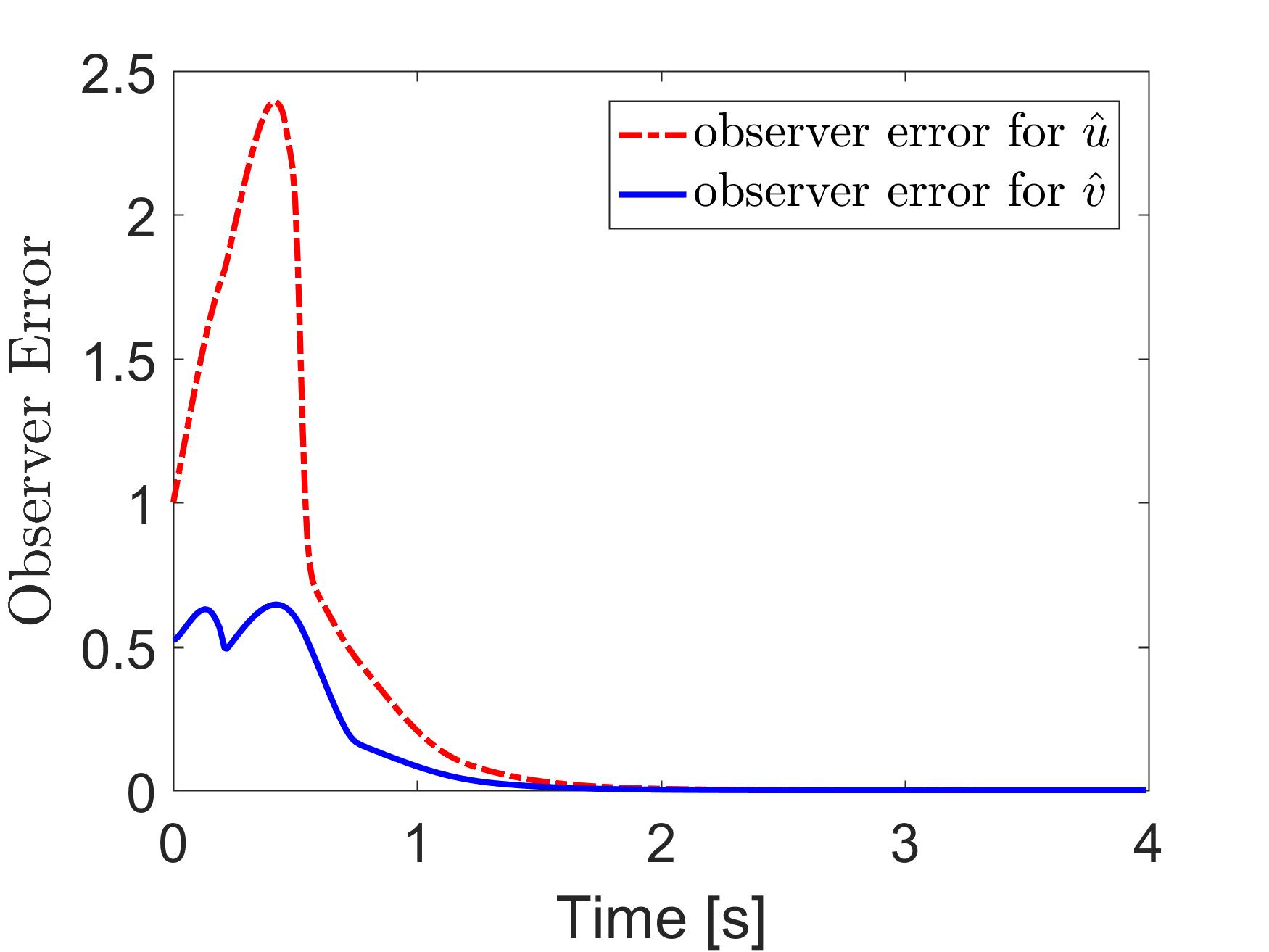}
\includegraphics[width=0.21\textwidth]{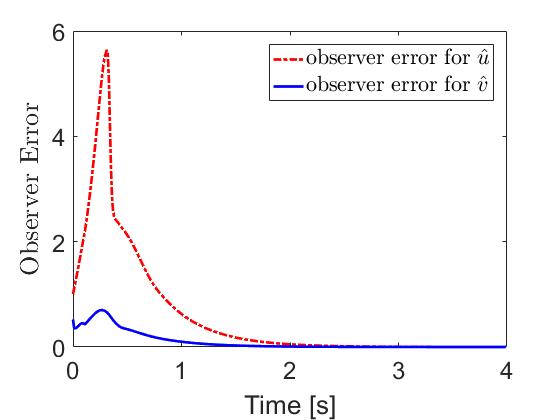}
\vspace{-0.15cm}
\caption{Observer error with observer kernels $\hat m_1(x,\xi)$ and $\hat m_2(x,\xi)$ and control law $\hat U(t)$.}\label{figure-observer-error}
\end{figure}

\section{Conclusion} \label{conclusion}
In this paper, we demonstrate the potential application of DeepONet to a PDE backstepping boundary control law of $2\times 2$  linear coupled hyperbolic PDE system. Generally, these systems follow distributed dynamics and arise in modeling systems like water canals, traffic, power transmission, and oil drilling systems.  In Section \ref{intro}, we  discussed major turning points in designing boundary controllers for broader classes of coupled hyperbolic  PDEs, advocating for DeepONet's use as a proof-based Machine Learning method for  PDE control.  PDE Backstepping-based DeepONet combines data-driven methods with deductive Lyapunov arguments using reliable data to expedite gain function computation from a known plant's model. Key results include \emph{Global Exponential Stability (GES)} for $L^2$ initial data with approximated kernel functions and \emph{Semi-global Practical Exponential Stability (SG-PES)} when the observer state is learned and fed to the controller.

\bibliographystyle{elsarticle-num}

\bibliography{refDataBase} 
%

%
%


       
\end{document}